\begin{document}
\theoremstyle{plain}
	\newtheorem{theorem}{Theorem}
	\newtheorem{lemma}{Lemma}
	\newtheorem{proposition}{Proposition}
\def\N{{\bf N}}
\def\Z{{\bf Z}}
\def\R{{\bf R}}

\title{Continuous Data Assimilation Using\\
	General Interpolant Observables}
\date{April 3, 2013}
\author{
Abderrahim Azouani\thanks{
		Freie Universit\"at Berlin,
		Institute f\"ur Mathematik I,
		Arnimallee 7, Berlin, Germany.
		{\it email:}{\tt\ azouani@math.fu-berlin.de}} \and
Eric Olson\thanks{
		Department of Mathematics and Statistics,
		University of Nevada,
		Reno, NV 89557, USA. {\it email:}{\tt\ ejolson@unr.edu}} \and
Edriss S. Titi\thanks{
		Department of Mathematics and
			Department of Mechanical and Aerospace Engineering,
		University of California,
		Irvine, CA 92697--3875, USA.
		{\it email:}{\tt\ etiti@math.uci.edu}}
	\thanks{
		The Department of Computer Science and Applied Mathematics,
		Weizmann Institute of Science,
		Rehovot 76100, Israel.
		{\it email:}{\tt\ edriss.titi@weizmann.ac.il}}
}

\maketitle

\begin{abstract}
We present a new continuous data assimilation algorithm based on ideas
that have been developed for designing finite-dimensional feedback
controls for dissipative dynamical systems, in particular,
in the context of the incompressible two-dimensional
Navier--Stokes equations.
These ideas are motivated by the fact that dissipative dynamical
systems possess finite numbers of determining parameters (degrees of freedom)
such as modes, nodes and local
spatial averages which
govern their long-term behavior.
Therefore, our algorithm allows the use of any type of measurement
data for which a general type of approximation interpolation operator exists.
Our main result provides conditions, on
the finite-dimensional spatial resolution of the collected data,
sufficient to guarantee that the approximating
solution, obtained by our algorithm
from the measurement data,
converges to the unknown reference solution over time. Our algorithm is also applicable in the context of signal synchronization in which  one can recover, asymptotically in time, the solution (signal) of the underlying dissipative system that is corresponding to a  continuously transmitted partial data.

{\bf Keywords:} Determining modes, volume elements and nodes;
continuous data assimilation;
two-dimensional Navier--Stokes equations; signal synchronization.

{\bf AMS Classification:}
35Q30; 93C20; 37C50; 76B75; 34D06.
\end{abstract}

\section{Introduction}

The goal of continuous data assimilation, and signal synchronization, is to use low spatial resolution
observational measurements, obtained continuously in time, to accurately
find the corresponding reference solution from which future predictions
can be made.
The motivating application of continuous data assimilation is
weather prediction.  The classical method of continuous data
assimilation, see Daley \cite{daley1991}, is to insert observational
measurements directly into a model as the latter is being integrated in time.
We propose a new approach based on ideas from
control theory, see Azouani and Titi \cite{azouani2013}.
A slightly similar approach in the context of stochastic
differential equations, using the low Fourier modes as observables/measurements, appears in a recent work
by Bl\"omker, Law, Stuart and Zygalakis \cite{stuart2012}.
Rather than inserting
the measurements directly into the model, i.e. into the nonlinear term, we introduce a feedback control
term that forces the model toward the reference
solution that is corresponding to the observations.
This is motivated by the fact that the measured data is usually obtained
as the values of the exact solutions at a discrete set of spatial
nodal points, and that it difficult to insert this data
directly into the underlying equation, because
it is not possible to obtain the exact values of
the spatial derivatives.
One should observe that in order to guarantee a unique
corresponding reference solution one has to supply
observational data with enough
spatial resolution.
This is the object of this paper.

While the classical method of continuous data assimilation is simple
in concept, special care has to be taken
concerning how the observations are inserted into a model in practice.
For example,
it is generally necessary to separate the fast and slow parts of a
solution before inserting the observations into the model.
The method proposed here does not require such a decomposition.
Since the observations
are not directly inserted into the model, we can rely on the
dissipation already present in the dynamics to filter the observed data, i.e. the viscous term will suppress the ``spill over" oscillations in the fine scales.
The advantage of this approach is that it works for a general
class of interpolant observables without modification.

Let $u(t)$ represent the state at time $t$ of the dynamical
system in which we are interested, and let $I_h(u(t))$ represent
our observations of this system at a coarse spatial resolution
of size $h$.
%The parameter $h$ is a
%length scale that corresponds to the spatial resolution of the observations.
Given observational measurements, $I_h(u(t))$, for $t\in [0,T]$,
our goal is to construct an increasingly accurate initial condition
from which predictions of $u(t)$, for $t>T$, can be made.
We do this by constructing an approximate solution $v(t)$ that
converges to $u(t)$ over time.

Suppose the time evolution of $u$ is governed by a
given evolution equation of the form
\begin{equation}
	{du\over dt} = F(u),
	\label{E1}
\end{equation}
where the initial data, $u_0$, is missing.
Our algorithm for constructing $v(t)$ from the observational
measurements $I_h(u(t))$ for $t\in[0,T]$ is given by
\begin{equation}
	{dv\over dt} = F(v) -\mu I_h(v) +\mu I_h(u),
    \label{approxeqn}
\end{equation}
\begin{equation}
	\qquad v(0)=v_0,
\end{equation}
where $\mu$ is a positive relaxation parameter,
which relaxes the coarse spatial scales of $v$
toward those of the observed data,
%with dimensions of inverse time
and $v_0$ is taken to be arbitrary.
It is worth stressing that our algorithm is designed to work for
general dissipative dynamical systems of the form (\ref{E1}).
Such systems are known to have finitely many degrees
of freedom in the form of determining parameters
of the type $I_h(u)$, see, for example,
Cockburn, Jones and Titi \cite{cockburn1996},
Foias, Manley, Rosa and Temam \cite{foias2001},
Foias and Prodi \cite{FP},
Foias and Temam \cite{FT1}, \cite{FT2},
Jones and Titi \cite{jones1992}, \cite{jones1993}, and references therein.
The incompressible two-dimensional Navier--Stokes equations provide
a concrete example of a dissipative dynamical system of this type.

We consider here the incompressible two-dimensional Navier--Stokes
equations, as a paradigm,  because they are amenable to mathematical
analysis while at the same time similar to the equations
used in realistic weather models.
Thus, we shall suppose the evolution of $u$ is
is governed by the Navier--Stokes system
\begin{equation} {\partial u\over \partial t} -\nu \Delta u
	+(u\cdot \nabla)u+\nabla p = f
	\label{nseq}
\end{equation}
\begin{equation}
	\nabla\cdot u = 0,
	\label{incomp}
\end{equation}
in the physical domain $\Omega$, with either no-slip Dirichlet, or
periodic, boundary conditions.
Here $u(x,t)$ represents velocity of the fluid at time $t$ at
position $x$, $\nu>0$ represents the kinematic viscosity,
$p(x,t)$ is the pressure and $f(x,t)$ is a time dependent body
force applied to the fluid.

%No-slip and periodic boundary conditions may be described as follows.
In the case of no-slip Dirichlet boundary conditions we take $u=0$
on $\partial \Omega$. The domain $\Omega$ is an open,
bounded and connected set in $\R^2$ with $C^2$ boundary,
such that $\partial \Omega$ can be represented locally as the
graph of a $C^2$ function.
In the case of periodic boundary conditions we require $u$ and $f$
to be $L-$periodic, in both $x$ and $y$ directions, and take
$\Omega=[0,L]^2$ to be the fundamental periodic domain.

Continuous data assimilation, in the context of the incompressible
two-dimensional
Navier--Stokes equations, was first studied
by Browning, Henshaw and Kreiss in \cite{browning1998},
later by Henshaw, Kreiss and Ystr\"om in \cite{henshaw2003}
and also by Olson and Titi in \cite{olson2003} and \cite{olson2008},
motivated by the concept of finite number of determining
modes which was introduced for the first time in \cite{FP} (see also \cite{foias2001}, \cite{olson2003}, and references therein).
These studies treated the case of periodic boundary conditions,
where the observations were given by the low
Fourier modes with wave numbers $k$, such that $|k|\le 1/h$.
Since the low modes essentially represent the large spatial scales
of the solution, the classical data assimilation algorithm works
well for this type of observations. In addition, it is  worth mentioning that this method is  consistent with some of the signal synchronization algorithms. Most recently, similar idea has also been introduced in \cite{FoJoKrTi} to show  that the long-time dynamics of the two-dimensional Navier--Stokes equations  can be imbedded in an infinite-dimensional dynamical system that is induced by  an ordinary differential equations, named {\it determining form}, which is governed by a globally Lipschitz vector field.

The method of constructing $v$, given by (\ref{approxeqn}),
allows the use of general interpolant observables,
given by interpolants $I_h\colon H^1(\Omega)\to L^2(\Omega)$ that are
linear and satisfy the following approximation property:
\begin{equation}
	\|\varphi-I_h(\varphi)\|_{L^2(\Omega)}^2
	\le c_0 h^2 \|\varphi\|_{H^1(\Omega)}^2
		\label{interpa}
\end{equation}
for every $\varphi\in H^1(\Omega)$.
The orthogonal projection onto the low Fourier modes, with
wave numbers k such that $|k|\le 1/h$, mentioned above, is
an example of such interpolant observable. However, there are many other
interpolant observables which satisfy (\ref{interpa}).

One physically relevant example of an interpolant which
satisfies condition (\ref{interpa})
are the volume elements studied
in \cite{jones1992} and \cite{jones1993} (see also
Foias and Titi \cite{FTi}).
In this case
$$
	I_h\big(\varphi(x)\big) =  \sum_{j=1}^N \bar \varphi_j \chi_{Q_j}(x)
\quad\hbox{where}\quad
	\bar \varphi_j = {N\over L^2}
		\int_{Q_j} \varphi(x)\,dx,
$$
and the domain $\Omega=[0,L]^2$, for the periodic boundary conditions case, has been divided into $N$ equal
squares $Q_j,$ with sides $h=L/\sqrt N$.
Volume elements generalize to any domain $\Omega$ on which
the Bramble--Hilbert lemma holds.
An elementary discussion of this lemma
in the context of finite element methods appears in Brenner and
Scott \cite{brennerscott2007}.

In addition, we also consider
interpolant observables given by linear interpolants
$I_h\colon H^2(\Omega)\to L^2(\Omega),$ that satisfy the following approximation property:
\begin{equation}
	\|\varphi-I_h(\varphi)\|_{L^2(\Omega)}^2
	\le {1\over 4} c_0^2h^4 \|\varphi\|_{H^2(\Omega)}^2,
		\label{interpah2}
\end{equation}
for every $\varphi\in H^2(\Omega)$. An example of this type of interpolant is given by measurements
at a discrete set of nodal points in $\Omega$.
% such that every
%point of $\Omega$ lies within a distance $h$ from at least one
%of the $x_i$.
Specifically, let $h>0$ be given, and let $\Omega=\cup_{j=1}^{N_h} Q_j,$ where $Q_j$ are
disjoint subsets such that ${\rm diam}\,Q_j\le h$, for $j=1,2,\ldots,N_h,$
and let $x_j\in Q_j$ be arbitrary points.
Then set, for example,
\begin{equation}
\label{interpnode}
	I_h\big(\varphi(x)\big)=\sum_{k=1}^{N_h} \varphi(x_k)\chi_{Q_j}(x).
\end{equation}
Following ideas in \cite{jones1993} (see also \cite{FT2}) one can show that $I_h(\varphi)$
satisfies (\ref{interpah2}).

Our paper is organized as follows.
First, we recall the functional setting of the two-dimensional
Navier--Stokes equations necessary for our analysis and then
use this setting to formulate our new method of continuous data
assimilation.  After this we proceed to our main task, that of
finding conditions under which the approximate solution obtained
by this algorithm of data assimilation converges to the reference
solution over time.
Section 3
treats the case of smooth, bounded domains
with no-slip Dirichlet boundary conditions, while
section 4 treats
the case of periodic boundary conditions.
Our main results may be stated as follows:

\begin{theorem}
\label{mainres}
Let $\Omega$ be an open, bounded and connected set in $\R^2$
with $C^2$ boundary, and let  $u$ be a solution to equations
(\ref{nseq})--(\ref{incomp}) with
no-slip Dirichlet boundary conditions.
Assume that $I_h$ satisfies (\ref{interpa}), with $h$ small enough such that
$$1/h^2\ge c_1 \lambda_1 G^2,$$
where $c_1$ is a constant given in (\ref{defc1}). Then there exists $\mu>0$, given explicitly in Proposition \ref{mainlem}, such that
$\|v-u\|_{L^2(\Omega)}\to 0$ exponentially, as $t\to\infty$.
\end{theorem}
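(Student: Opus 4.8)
The plan is to study the difference $w = v - u$, derive an energy estimate for $\|w\|_{L^2}$, and show that with $\mu$ large enough (but $h$ small enough that the required $\mu$ is admissible relative to the dissipation) the $L^2$ norm of $w$ decays exponentially via a Gronwall argument. First I would subtract \eqref{nseq} from \eqref{approxeqn} to obtain the evolution equation for $w$, namely
\[
  \frac{dw}{dt} - \nu\Delta w + (u\cdot\nabla)w + (w\cdot\nabla)u + \nabla(q-p) = -\mu I_h(w),
\]
together with $\nabla\cdot w = 0$ and the no-slip boundary condition. I would take the $L^2$ inner product with $w$, project onto the divergence-free space (so the pressure term drops), and use the standard orthogonality $\langle (u\cdot\nabla)w,\,w\rangle = 0$. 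This gives
\[
  \frac12\frac{d}{dt}\|w\|_{L^2}^2 + \nu\|\nabla w\|_{L^2}^2
    = -\langle (w\cdot\nabla)u,\,w\rangle - \mu\langle I_h(w),\,w\rangle.
\]

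The next step is to rewrite the feedback term so the interpolation estimate \eqref{interpa} can be used: write $-\mu\langle I_h(w),w\rangle = -\mu\|w\|_{L^2}^2 + \mu\langle w - I_h(w),\,w\rangle$, and bound the last term by $\mu\|w-I_h(w)\|_{L^2}\|w\|_{L^2} \le \mu c_0^{1/2} h \|w\|_{H^1}\|w\|_{L^2}$, then absorb part of it using Young's inequality, splitting off a piece controlled by $\nu\|\nabla w\|_{L^2}^2$ (after noting $\|w\|_{H^1}^2 \le \|w\|_{L^2}^2 + \|\nabla w\|_{L^2}^2$ and invoking the Poincaré inequality $\lambda_1\|w\|_{L^2}^2 \le \|\nabla w\|_{L^2}^2$). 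The nonlinear term $\langle (w\cdot\nabla)u,\,w\rangle$ is handled by the two-dimensional Ladyzhenskaya inequality $\|w\|_{L^4}^2 \le c\,\|w\|_{L^2}\|\nabla w\|_{L^2}$, giving a bound of the form $c\,\|w\|_{L^2}\|\nabla w\|_{L^2}\|\nabla u\|_{L^2}$, which is again split with Young's inequality into a $\frac{\nu}{4}\|\nabla w\|_{L^2}^2$ piece and a term $\frac{c}{\nu}\|\nabla u\|_{L^2}^2\|w\|_{L^2}^2$. At this point one needs the uniform-in-time bound on $\|\nabla u\|_{L^2}$ coming from the attractor estimate, which is where the Grashof number $G$ (and the constant $c_1$) enters; I would invoke the standard a priori bound $\limsup_{t\to\infty}\|\nabla u(t)\|_{L^2}^2 \le c\,\nu^2\lambda_1 G^2$.

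Collecting terms, I expect to arrive at a differential inequality of the shape
\[
  \frac{d}{dt}\|w\|_{L^2}^2 + \Big(\nu\lambda_1 + 2\mu - \frac{c}{\nu}\|\nabla u\|_{L^2}^2 - C\mu^2 c_0 h^2\Big)\|w\|_{L^2}^2 \le 0,
\]
after choosing $\mu$ so that the $\mu c_0 h^2$-weighted $\|\nabla w\|^2$ contribution is dominated by $\nu\|\nabla w\|^2$ — this is precisely the condition $\mu c_0 h^2 \lesssim \nu$, which together with the smallness hypothesis $1/h^2 \ge c_1\lambda_1 G^2$ lets one pick $\mu \sim \nu/(c_0 h^2)$ large enough that $2\mu$ beats $\frac{c}{\nu}\|\nabla u\|^2 \lesssim \nu\lambda_1 G^2$ while $C\mu^2 c_0 h^2 \lesssim \mu\nu/h^2\cdot(c_0 h^2)$ stays controlled. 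The explicit choice of $\mu$ is the content of Proposition \ref{mainlem}. The main obstacle — and the reason the hypothesis on $h$ is needed — is the simultaneous balancing act: $\mu$ must be large enough to dominate the destabilizing nonlinear term (forcing $\mu \gtrsim \nu\lambda_1 G^2$), yet the error introduced by observing only $I_h(w)$ rather than $w$ contributes a term proportional to $\mu^2 h^2$ that must remain absorbable, forcing $\mu \lesssim \nu/h^2$; these are compatible exactly when $1/h^2 \gtrsim \lambda_1 G^2$. Once the coefficient of $\|w\|_{L^2}^2$ in the inequality is a positive constant $\delta$, Gronwall's inequality yields $\|w(t)\|_{L^2}^2 \le e^{-\delta t}\|w(0)\|_{L^2}^2$, i.e. exponential convergence of $v$ to $u$ in $L^2(\Omega)$; a minor technical point is that the bound on $\|\nabla u\|_{L^2}$ is only asymptotic, so strictly one proves the decay after a transient time $t_0$, which still gives the claimed $t\to\infty$ conclusion.
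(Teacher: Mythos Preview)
Your overall strategy---derive the $w$-equation, take the $L^2$ inner product with $w$, split the feedback term as $-\mu|w|^2 + \mu(w-I_h(w),w)$, and control the nonlinear term $\langle B(w,u),w\rangle$ via Ladyzhenskaya---matches the paper's approach, and your handling of the interpolation error term is equivalent (the paper applies Young's inequality to the inner product directly rather than first using Cauchy--Schwarz, which avoids the $\mu^2 c_0 h^2$ intermediate term, but the resulting constraint $\mu c_0 h^2 \le \nu$ is the same).

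The genuine gap is your invocation of the pointwise bound $\limsup_{t\to\infty}\|\nabla u(t)\|_{L^2}^2 \le c\,\nu^2\lambda_1 G^2$. That bound is a \emph{periodic} boundary condition result: it relies on the orthogonality $(B(u,u),Au)=0$, which fails for no-slip Dirichlet conditions. For Dirichlet boundary conditions the paper (Theorem~\ref{globalb}) only provides the time-averaged bound $\int_t^{t+T}\|u\|^2\,d\tau \le 2(1+T\nu\lambda_1)\nu G^2$; the uniform-in-time $V$-bound one can extract for Dirichlet via uniform Gronwall is exponential in a power of $G$, which would yield a condition on $h$ far worse than the stated $1/h^2 \ge c_1\lambda_1 G^2$.

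The paper's remedy is to stop at the differential inequality
\[
  \frac{d}{dt}|w|^2 + \Big(\mu - \frac{c^2}{\nu}\|u(t)\|^2\Big)|w|^2 \le 0
\]
and apply the generalized Gronwall lemma (Lemma~\ref{unigron}), which only requires $\liminf_{t\to\infty}\int_t^{t+T}\alpha(s)\,ds>0$. Taking $T=(\nu\lambda_1)^{-1}$ and using the time-integrated bound gives $\int_t^{t+T}\alpha \ge \mu/(\nu\lambda_1) - 4c^2 G^2$, so $\mu \ge 5c^2\nu\lambda_1 G^2$ suffices. This is precisely what allows the $G^2$ scaling of $1/h^2$ in the Dirichlet case; your pointwise-Gronwall route cannot recover it.
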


\noindent
Here $G$ denotes the Grashof number
\begin{equation}
	G={1 \over \nu^2 \lambda_1}
		\limsup_{t\to\infty} \|f(t)\|_{L^2}
\label{grash}
\end{equation}
where $\lambda_1$ is the smallest eigenvalue of the Stokes
operator subject to homogeneous Dirichlet boundary conditions. Let us remark, again, that the constant $c_1$ is   depends only on $c_0$, given in (\ref{interpa}),
and the shape, but not the size, of the domain $\Omega$.
In particular, $c_1$ is given by (\ref{defc1})
where the constant $c$ is chosen
so the bound (\ref{lady}) on the non-linear term holds.
Moreover, $\mu$ may be chosen equal to $5c^2G^2\nu\lambda_1$
as indicated in Proposition \ref{mainlem}, below.

Results similar to Theorem \ref{mainresp} hold when $I_h$ satisfies
(\ref{interpah2}), however, we omit the proof of this result in the
case of no-slip Dirichlet boundary conditions and instead proceed
directly to the the case of periodic boundary conditions where sharper
estimates may be obtained.  In particular, we prove

\begin{theorem}
\label{mainresp}
Let $\Omega=[0,L]^2$ and let $u$ be a solution to equations
(\ref{nseq})--(\ref{incomp}) with periodic boundary conditions.
Let $I_h$ satisfy either (\ref{interpa})
or (\ref{interpah2}), with $h$ small enough such that
$$1/h^2\ge c_2 \lambda_1 G\big(1+ \log (1+G)\big),$$
where $c_2$ is a constant given in (\ref{defc2}). Then
there exists $\mu>0$, given explicitly in Proposition \ref{mainlemp}, such that
	$\|v-u\|_{H^1(\Omega)}\to 0$ exponentially, as $t\to\infty$.
\end{theorem}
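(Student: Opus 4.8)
The plan is to derive an energy estimate for the difference $w = v - u$, which satisfies
\begin{equation}
	{dw\over dt} -\nu \Delta w + (u\cdot\nabla)w + (w\cdot\nabla)u + \nabla(q-p) = -\mu I_h(w),
\end{equation}
together with $\nabla\cdot w = 0$ and periodic boundary conditions. Taking the $L^2$ inner product with $Aw$ (the Stokes operator applied to $w$) — rather than with $w$ itself, since in the periodic case we aim for convergence in $H^1$ — and using that $\|\nabla w\|_{L^2}^2 = (w, Aw)$, I would obtain
\begin{equation}
	{1\over 2}{d\over dt}\|\nabla w\|_{L^2}^2 + \nu\|Aw\|_{L^2}^2
	= -b(u,w,Aw) - b(w,u,Aw) - \mu(I_h(w), Aw).
\end{equation}
The feedback term is rewritten as $-\mu(I_h(w),Aw) = -\mu(w,Aw) + \mu(w - I_h(w), Aw) = -\mu\|\nabla w\|_{L^2}^2 + \mu(w-I_h(w),Aw)$, and the last term is controlled by Cauchy--Schwarz and (\ref{interpah2}) (or (\ref{interpa})) followed by Young's inequality, absorbing a piece of $\nu\|Aw\|_{L^2}^2$ and leaving a term of the form $(\mu^2 c_0^2 h^4 / \nu)\|Aw\|_{L^2}^2$ or, in the $H^1$-interpolant case, $(\mu c_0 h^2/\sqrt\nu)$-type factors — the point being that smallness of $h$ keeps this absorbed.

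Next I would estimate the two nonlinear terms. The term $b(u,w,Aw)$ and $b(w,u,Aw)$ must be bounded using the two-dimensional logarithmic Sobolev (Brezis--Gallouet) inequality, $\|\varphi\|_{L^\infty} \le c\|\varphi\|_{H^1}(1 + \log(\|\varphi\|_{H^2}/(\lambda_1^{1/2}\|\varphi\|_{H^1})))^{1/2}$, which is exactly why the hypothesis on $1/h^2$ in Theorem \ref{mainresp} carries the factor $G(1+\log(1+G))$ instead of $G^2$: the logarithm in the nonlinear estimate trades against the logarithm appearing in the required spatial resolution. I expect the nonlinear estimate to produce terms like $c\|\nabla u\|_{L^2}\|\nabla w\|_{L^2}\|Aw\|_{L^2}(1+\log(\dots))^{1/2}$, which after Young's inequality give an absorbable multiple of $\nu\|Aw\|_{L^2}^2$ plus a coefficient times $\|\nabla w\|_{L^2}^2$ whose size is governed by $\|\nabla u\|_{L^2}^2$ and hence, asymptotically, by $\nu^2\lambda_1 G^2$ up to logarithmic corrections. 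I would invoke the standard a priori bound $\limsup_{t\to\infty}\|\nabla u(t)\|_{L^2}^2 \le c\nu^2\lambda_1 G^2$ (with the appropriate sharpened form in the periodic case) to control these coefficients uniformly in time for $t$ large.

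Combining everything yields a differential inequality of the form ${d\over dt}\|\nabla w\|_{L^2}^2 + (\mu - K)\|\nabla w\|_{L^2}^2 \le 0$ for $t$ sufficiently large, where $K$ collects the coefficients coming from the nonlinear terms and is bounded by a constant times $\nu\lambda_1 G(1+\log(1+G))$ (using $1/h^2 \ge c_2\lambda_1 G(1+\log(1+G))$ to handle the terms that also involved $h$). Choosing $\mu$ large enough — specifically the explicit value quoted from Proposition \ref{mainlemp} — makes $\mu - K \ge \delta > 0$, and then Grönwall's inequality gives $\|\nabla w(t)\|_{L^2}^2 \le e^{-\delta(t - t_0)}\|\nabla w(t_0)\|_{L^2}^2 \to 0$. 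The main obstacle is the bookkeeping in the nonlinear estimate: one must be careful that the logarithmic factor, which a priori involves $\|Aw\|_{L^2}$ (and hence cannot simply be treated as a constant), is handled correctly — either by first establishing that $\|\nabla w\|_{L^2}$ stays bounded (so the argument of the log is controlled) via a preliminary cruder estimate, or by absorbing the offending term and using that $x\log x$ grows slowly; threading this together with the choice of $\mu$ and the smallness condition on $h$ so that all constants match the explicitly claimed values is the delicate part.
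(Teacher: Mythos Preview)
Your overall architecture is right --- take the $V$-energy by pairing with $Aw$, handle the feedback term exactly as you describe, bring in Br\'ezis--Gallouet, and close with a Gronwall-type argument --- but there is a genuine gap in the treatment of the nonlinear terms that would prevent you from reaching the claimed $G(1+\log(1+G))$ resolution and would instead leave you at $G^2$ (the no-slip result) or worse.

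First, a small omission: the difference equation for $w$ also carries $(w\cdot\nabla)w$, not only $(u\cdot\nabla)w+(w\cdot\nabla)u$. In the periodic case this is harmless because $(B(w,w),Aw)=0$, but that identity must be invoked.

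The substantive gap is that you try to estimate $b(u,w,Aw)$ and $b(w,u,Aw)$ separately. The paper instead uses the periodic-only identity
\[
\big(B(u,w),Aw\big)+\big(B(w,u),Aw\big)=-\big(B(w,w),Au\big),
\]
which collapses the two terms into a single one whose bound via Br\'ezis--Gallouet (applied to $w$) is
\[
\big|(B(w,w),Au)\big|\le c\,\|w\|^2\Big(1+\log\tfrac{|Aw|^2}{\lambda_1\|w\|^2}\Big)\,|Au|.
\]
Here the entire $u$-dependence is the single factor $|Au|$, and that is exactly what permits the logarithmic improvement: after handling the $|Aw|$-dependent logarithm and a further Young inequality, only $\int |Au|^2\,d\tau$ enters, and its time average scales like $G^2$, yielding $\mu\gtrsim \nu\lambda_1 J G$ with $J\sim 1+\log(1+G)$. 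If you keep $b(u,w,Aw)$ and $b(w,u,Aw)$ separate and bound them with Br\'ezis--Gallouet on $u$ (or with Ladyzhenskaya-type products), the resulting coefficients involve $\|u\|^2$ or $\|u\|\,|Au|$ in ways that, after absorption, force $\mu\gtrsim \nu\lambda_1 G^2$ at best --- you cannot recover the $G\log G$ threshold stated in the theorem.

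The second missing piece is the resolution of the ``log of $|Aw|$'' obstacle you flag. The paper does not bound that logarithm; it instead combines the dissipative term and the log term into a single function of $r=|Aw|^2/(\lambda_1\|w\|^2)\ge1$,
\[
\nu\lambda_1\,r-2c\,|Au|\,(1+\log r),
\]
and uses the elementary minimum $\min_{r\ge1}\{r-\beta(1+\log r)\}\ge -\beta\log\beta$ with $\beta=2c|Au|/(\nu\lambda_1)$. This eliminates $|Aw|$ cleanly and produces the $|Au|\log|Au|$ factor that, after the pointwise bound (\ref{auball}) on $|Au|$, becomes $J\,|Au|$.

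Finally, the closing step is not a standard Gronwall with a pointwise coefficient: the paper applies Young to trade $J\,|Au|$ for $J^2|Au|^2/\mu$, then uses the \emph{time-averaged} bound $\int_t^{t+T}|Au|^2\le 4\nu\lambda_1 G^2$ together with the uniform Gronwall Lemma~\ref{unigron}. Your plan to use $\limsup_{t\to\infty}\|\nabla u\|^2\le c\nu^2\lambda_1 G^2$ pointwise would again cost you a factor of $G$.
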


\noindent
Let us remark again that $c_2$ depends only on $c_0$, and that
$\mu$ may be chosen as $3c_2\nu\lambda_1 G(1+\log(1+G))/c_0$.
In particular, $\mu$ is given in Proposition \ref{mainlemp}
and
$c_2$ is defined in (\ref{defc2}) as an
increasing function of $c_0$ and $c$, where $c$ is chosen
large enough so that the bounds in both
(\ref{auball}) and (\ref{brezis}) hold.

Note that the estimate on the length scale $h$ in
Theorem~\ref{mainresp} can be compared to previous results
reported in \cite{olson2003}.
Let $\tilde v(t)$ be the approximate solution obtained by
the method of continuous data assimilation introduced
in \cite{olson2003} for
the interpolant observable $I_h(u)$ given by
projection onto the Fourier modes with wave numbers $|k|<1/h$.
In \cite{olson2003}
it was shown, that
for small values of $h$, such that $1/h^2 \sim \lambda_1 G$,
$\|u(t)-\tilde v(t)\|_{H^1(\Omega)}\to 0$ exponentially
fast, as $t\to\infty$.
Up to a logarithmic correction term,
Theorem~\ref{mainresp} states similar estimates on $h$
for the new algorithm which covers
a much wider class of interpolant observables.

The final section of this paper discusses numerical simulations,
which are in progress, related works, and closes with a few
concluding remarks.

\section{Preliminaries}

This section reviews the functional setting of the
two-dimensional Navier--Stokes equations with no-slip and
periodic boundary conditions, recalls some facts that will be
used in the remainder of the paper and then gives an explicit
formulation of our new method for continuous data assimilation
in this context.
Following
Constantin and Foias~\cite{constantin1988},
Foias, Manley, Rosa and Temam~\cite{foias2001},
Robinson~\cite{robinson2001} and
Temam~\cite{temam1983}, we
begin by defining a suitable domain $\Omega$ and space ${\cal V}$
of smooth functions which satisfy each type of boundary conditions.

\medskip
{\bf No-slip Dirichlet Boundary Conditions.}
Let $\Omega$ be an
open, bounded and connected domain with $C^2$ boundary.
%with twice continuously
%differentiable boundary.
Define $\cal V$ to be set of all $C^\infty$ compactly supported
vector fields from $\Omega$ to $\R^2$ that are
divergence free.

\medskip
{\bf Periodic Boundary Conditions.}
Let $\Omega=[0,L]^2$ for some fixed $L>0$.
Define ${\cal V}$ to be
the set of all $L$-periodic trigonometric polynomials
from $\R^2$ to $\R^2$ that are
divergence free and have zero averages.
\medskip

Given ${\cal V}$ corresponding to either type of boundary
conditions let $H$ be the closure of ${\cal V}$ in $L^2(\Omega;\R^2)$
and $V$ be the closure of ${\cal V}$ in $H^1(\Omega;\R^2)$.
The spaces $H$ and $V$ are Hilbert spaces with inner products
$$
	(u,v)=\int_\Omega u(x)\cdot v(x)\,dx\qquad\hbox{and}\qquad
	(\!(u,v)\!)=\sum_{i,j=1}^2 \int_\Omega
		{\partial u_i\over\partial x_j}
		{\partial v_i\over\partial x_j}\,dx,
$$
respectively.
Denote the norms of $H$ and $V$ by
$$	|u|=\sqrt{(u,u)}
		\qquad\hbox{and}\qquad
	\|u\|=\sqrt{(\!(u,u)\!)},$$
and the dual of $V$ by $V^*$ with the pairing $\langle u,v\rangle$
where $u\in V^*$ and $v\in V$.

Define the Leray projector $P_\sigma$ as the orthogonal projection
from $L^2(\Omega;\R^2)$
onto $H$, and define the Stokes operator $A\colon V\to V^*$, and
the bilinear term $B\colon V\times V\to V^*$
to be the continuous extensions of the operators given by
$$
	Au=-P_\sigma \Delta u\qquad\hbox{and}\qquad
	B(u,v)=P_\sigma (u\cdot\nabla v),
$$
respectively, for any smooth solenoidal
vector fields $u$ and $v$ in ${\cal V}$.

Denote the domain of $A$ by ${\cal D}(A)=\big\{\, u\in V : Au\in H\,\big\}$.
The linear operator $A$ is self-adjoint and
positive definite with compact inverse $A^{-1}\colon H\to H$.
Thus, there exists a complete
orthonormal set of eigenfunction $w_i$ in $H$ such that
$Aw_i=\lambda_i w_i$ where $0<\lambda_i\le\lambda_{i+1}$ for $i\in\N$.
Writing $\lambda_1$ as the smallest eigenvalue of $A$
we have the
following Poincar\'e inequalities:
\begin{equation}
	\hbox{if } u\in V\hbox{ then }
	\lambda_1 |u|^2\le \|u\|^2,
	\label{poinc}
\end{equation}
\begin{equation}
	\hbox{if } u\in {\cal D}(A)\hbox{ then }
	\lambda_1 \|u\|^2\le |Au|^2.
	\label{poincA}
\end{equation}
Note that for $u\in H$, $|u|=\|u\|_{L^2(\Omega)}$ and for
$u\in V$ the Poincar\'e inequality implies
$\|u\|$ is equivalent to $\|u\|_{H^1(\Omega)}$.

The bilinear term $B$ has the algebraic property that
\begin{equation}
	\big\langle B(u,v),w\big\rangle = -\big\langle B(u,w),v\big\rangle
	\label{balg}
\end{equation}
for $u,v,w\in V$, and consequently the orthogonality property that
\begin{equation}
	\big\langle B(u,w),w\big\rangle =0.
	\label{borth}
\end{equation}
Here the pairing $\langle \cdot,\cdot\rangle$ denotes the
dual action of $V^*$ on $V$.  Details may be found, e.g., in
\cite{constantin1988}, \cite{foias2001}, \cite{robinson2001}
and \cite{temam1983}.

In the case of periodic boundary conditions the bilinear term
possesses the additional orthogonality property
\begin{equation}
	\big(B(w,w),Aw\big)=0,
\qquad\hbox{for every}\qquad w\in {\cal D}(A);
	\label{balgp}
\end{equation}
and consequently one has
\begin{equation}
	\big(B(u,w),Aw\big)
	+\big(B(w,u),Aw\big)
	=-\big(B(w,w),Au\big).
	\label{borthp}
\end{equation}

Note that the bilinear term satisfies a number of inequalities
which hold for either no-slip or periodic boundary conditions.
These are
\begin{equation}
	\big|\big\langle B(u,v),w\big\rangle \big|\le
		c |u|^{1/2}\|u\|^{1/2} \|v\| |w|^{1/2} \|w\|^{1/2},
	\label{lady}
\end{equation}
for every $u,v,w\in V$,
\begin{equation}
	\big|\big(B(u,v),w\big)\big|\le
		c |u|^{1/2}\|u\|^{1/2} \|v\|^{1/2} |Av|^{1/2} |w|
	\label{beqmf}
\end{equation}
for every $u\in V$, $v\in{\cal D}(A)$ and $w\in H$,
and
\begin{equation}
	\big|\big(B(u,v),w\big)\big|\le
		c |u|^{1/2}|Au|^{1/2} \|v\| |w|,
	\label{beqlf}
\end{equation}
for every $u\in {\cal D}(A)$ and $v,w\in V$,
where $c$ is a dimensionless constant depending only on
the shape, but not the size, of $\Omega$.
These inequalities may be obtained from the H\"older's inequality,
the Sobolev inequalities and Ladyzhenskaya's inequality, see, e.g.,
\cite{constantin1988}, \cite{foias2001}, \cite{robinson2001}
and \cite{temam1983}.

%Note there are two other inequalities we shall use which
%contain a dimensionless constant depending only on $\Omega$.
%These are (\ref{auball}) and (\ref{brezis}) which will be
%used for our results in the periodic case.  For simplicity
%we denote the constant appearing in each of these
%inequalities by $c$ with the understanding that $c$ is
%sufficiently large that all the required bounds hold.

With the above notation we write the incompressible
two-dimensional Navier--Stokes equations in functional form as
\begin{equation}
	{du\over dt}+\nu Au+B(u,u)=f
		\label{nseqfunc}
\end{equation}
with initial condition $u(0)=u_0$.
We have assumed $f\in H$ so that $P_\sigma f=f$.
As shown in
\cite{constantin1988}, \cite{foias2001}, \cite{robinson2001}
and \cite{temam1983}
these equations are well-posed; and possess a
compact finite-dimensional global attractor, when $f$ is time-independent.
Specifically, we have

\begin{theorem} [Existence and Uniqueness of Strong Solutions]
\label{exuniq}
Suppose $u_0\in V$ and $f\in L^\infty\big((0,\infty),H\big)$.  Then the initial value problem
(\ref{nseqfunc}) has a unique solution that satisfies
$$
	u\in C\big([0,T];V\big)\cap L^2\big((0,T);D(A)\big)
\quad\hbox{and}\quad
	{du\over dt}\in L^2\big((0,T);H\big),
$$
for any $T>0$.
\end{theorem}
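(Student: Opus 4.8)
The plan is to prove this classical result by the Galerkin method, exploiting the a priori estimates that are special to two space dimensions. First I would fix $T>0$, let $P_m$ denote the orthogonal projection in $H$ onto the space spanned by $w_1,\dots,w_m$, and seek an approximate solution $u_m(t)=\sum_{i=1}^m g_i(t)w_i$ of the finite-dimensional system
$$
{du_m\over dt}+\nu Au_m+P_mB(u_m,u_m)=P_mf,\qquad u_m(0)=P_mu_0.
$$
Since $B$ is bilinear, the nonlinearity is a quadratic polynomial in the coefficients $g_i$, so Picard's theorem gives a unique solution on a maximal interval $[0,T_m)$; the a priori bounds below show the solution does not blow up, so $T_m>T$.

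The core of the argument is a chain of $m$-uniform a priori estimates. Taking the $H$-inner product of the Galerkin system with $u_m$ and using the orthogonality (\ref{borth}) gives ${1\over2}{d\over dt}|u_m|^2+\nu\|u_m\|^2=(f,u_m)$; combined with the Poincar\'e inequality (\ref{poinc}) and Young's inequality, this bounds $u_m$ in $L^\infty(0,T;H)$ and $\|u_m\|$ in $L^2(0,T)$ in terms of $|u_0|$, $\nu$, $\lambda_1$, $\|f\|_{L^\infty((0,\infty),H)}$ and $T$ only. Next I would take the inner product with $Au_m$, obtaining
$$
{1\over2}{d\over dt}\|u_m\|^2+\nu|Au_m|^2=-\big(B(u_m,u_m),Au_m\big)+(f,Au_m).
$$
Here two-dimensionality is essential: by (\ref{beqmf}) one has $|(B(u_m,u_m),Au_m)|\le c|u_m|^{1/2}\|u_m\|\,|Au_m|^{3/2}$, and Young's inequality absorbs a fraction of $|Au_m|^2$ into the viscous term, leaving ${d\over dt}\|u_m\|^2+\nu|Au_m|^2\le C|u_m|^2\|u_m\|^4+C\nu^{-1}|f|^2$. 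Since $\int_0^T\|u_m\|^2\,dt$ has already been controlled, Gronwall's inequality — together with $\|u_m(0)\|\le\|u_0\|$, which is where the hypothesis $u_0\in V$ enters — bounds $\|u_m\|$ in $L^\infty(0,T)$ and $|Au_m|$ in $L^2(0,T)$. Finally, reading $du_m/dt$ off the equation and estimating $|P_mB(u_m,u_m)|\le c|u_m|^{1/2}\|u_m\|\,|Au_m|^{1/2}$ (again by (\ref{beqmf})) yields a bound on $du_m/dt$ in $L^2(0,T;H)$.

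With these bounds in hand I would pass to a subsequence for which $u_m\rightharpoonup u$ weak-$*$ in $L^\infty(0,T;V)$, $u_m\rightharpoonup u$ weakly in $L^2(0,T;{\cal D}(A))$, and $du_m/dt\rightharpoonup du/dt$ weakly in $L^2(0,T;H)$. The Aubin--Lions compactness lemma, applied with the compact embedding ${\cal D}(A)\hookrightarrow V$, then yields strong convergence $u_m\to u$ in $L^2(0,T;V)$, which — together with the uniform $L^\infty(0,T;V)$ bound — suffices to pass to the limit in $B(u_m,u_m)$; the linear terms pass to the limit by weak convergence, so $u$ satisfies (\ref{nseqfunc}) as an identity in $L^2(0,T;H)$. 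Since $u\in L^2(0,T;{\cal D}(A))$ with $du/dt\in L^2(0,T;H)$, the function $t\mapsto\|u(t)\|^2$ is absolutely continuous (its derivative equals $2(Au,du/dt)\in L^1(0,T)$), and a standard argument then gives $u\in C([0,T];V)$, with $u(0)=u_0$ attained in $V$.

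Uniqueness is the easy part: if $u$ and $\tilde u$ are two solutions with the same initial datum and $w=u-\tilde u$, then writing $B(u,u)-B(\tilde u,\tilde u)=B(w,u)+B(\tilde u,w)$ and using (\ref{borth}) gives ${1\over2}{d\over dt}|w|^2+\nu\|w\|^2=-(B(w,u),w)$; by (\ref{lady}), $|(B(w,u),w)|\le c|w|\,\|w\|\,\|u\|\le{\nu\over2}\|w\|^2+{c^2\over2\nu}\|u\|^2|w|^2$, so ${d\over dt}|w|^2\le{c^2\over\nu}\|u\|^2|w|^2$, and since $\int_0^T\|u\|^2\,dt<\infty$ and $|w(0)|=0$, Gronwall forces $w\equiv0$. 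I expect the main obstacle to be the $V$-level a priori estimate: one must extract from the nonlinear term only a controlled fraction of $|Au_m|^2$ and then close a Gronwall inequality whose coefficient is the already-bounded quantity $\int_0^T\|u_m\|^2\,dt$ — both features rely essentially on the planar structure of the flow (the analogue fails in three dimensions, where only local-in-time strong solutions are known). The other delicate point, the passage to the limit in $B(u_m,u_m)$, is handled by the strong $L^2(0,T;V)$ convergence above.
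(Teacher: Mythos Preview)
Your proof is correct and follows the classical Galerkin approach, but note that the paper does not actually prove this theorem: it is stated as a known result, with the remark ``As shown in \cite{constantin1988}, \cite{foias2001}, \cite{robinson2001} and \cite{temam1983} these equations are well-posed.'' Your argument is precisely the one found in those references, and it also closely mirrors the proof the paper \emph{does} give for Theorem~\ref{vwposed} (well-posedness of the data assimilation equations), which uses the same Galerkin scheme, the same $H$- and $V$-level energy estimates via (\ref{beqmf}), and the same appeal to Aubin's compactness theorem.
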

\noindent

We now give bounds on solutions $u$ of
(\ref{nseqfunc}) that will be used in our later analysis.
With the exception of inequality (\ref{auball}) due to
Dascaliuc, Foias and Jolly \cite{dascaliuc2010} these
estimates appear in any the references listed above.

\begin{theorem}
\label{globalb}
Fix $T>0$, and let $G$ be the Grashof number given in (\ref{grash}). Suppose that $u$ is the solution of (\ref{nseqfunc}), corresponding to the initial value $u_0$, then there
 exists a time $t_0$, which depends on $u_0$, such that for all $t\ge t_0$ we have:
\begin{equation}
	|u(t)|^2\le 2\nu^2 G^2 \qquad\hbox{and}\qquad
	\int_{t}^{t+T} \|u(\tau)\|^2 d\tau
		\le 2\big(1 + T\nu\lambda_1\big) \nu G^2.
	\label{avgv}
\end{equation}
In the case of periodic boundary conditions we also have:
\begin{equation}
	\|u(t)\|^2 \le 2\nu^2\lambda_1 G^2,\qquad
	\int_{t}^{t+T} |Au(\tau)|^2 d\tau
	\le 2(1+T\nu\lambda_1)\nu\lambda_1 G^2;
	\label{avgda}
\end{equation}
furthermore, if $f\in H$ is time-independent then
\begin{equation}
	|Au(t)|^2 \le c\nu^2\lambda_1^2 (1+G)^4.
	\label{auball}
\end{equation}
\end{theorem}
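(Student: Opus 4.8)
The plan is to derive (\ref{avgv}) and (\ref{avgda}) from the classical energy and enstrophy balances for (\ref{nseqfunc}) --- all the manipulations being justified by the strong-solution regularity furnished by Theorem~\ref{exuniq} --- and to quote \cite{dascaliuc2010} for the uniform $H^2$ bound (\ref{auball}).

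\emph{Energy estimate.} I would pair (\ref{nseqfunc}) with $u$ in $H$ and use the orthogonality (\ref{borth}) to kill the nonlinear term, obtaining $\frac12\frac{d}{dt}|u|^2+\nu\|u\|^2=(f,u)$. Bounding the right-hand side by Cauchy--Schwarz, the Poincar\'e inequality (\ref{poinc}) and Young's inequality, namely $(f,u)\le|f|\,|u|\le\lambda_1^{-1/2}|f|\,\|u\|\le\frac\nu2\|u\|^2+\frac1{2\nu\lambda_1}|f|^2$, gives $\frac{d}{dt}|u|^2+\nu\|u\|^2\le\frac1{\nu\lambda_1}|f|^2$, and one more use of (\ref{poinc}) on the dissipative term yields $\frac{d}{dt}|u|^2+\nu\lambda_1|u|^2\le\frac1{\nu\lambda_1}|f|^2$. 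By the definition (\ref{grash}) of $G$, for each $\varepsilon>0$ there is a time past which $|f(t)|^2\le(1+\varepsilon)\nu^4\lambda_1^2G^2$; Gronwall's inequality then supplies a time $t_0$, depending on $u_0$ through the decaying transient $|u_0|^2e^{-\nu\lambda_1t}$, such that $|u(t)|^2\le2\nu^2G^2$ for all $t\ge t_0$. Integrating $\frac{d}{dt}|u|^2+\nu\|u\|^2\le\frac1{\nu\lambda_1}|f|^2$ over $[t,t+T]$ with $t\ge t_0$ and discarding the nonnegative term $|u(t+T)|^2$ gives $\nu\int_t^{t+T}\|u\|^2\,d\tau\le|u(t)|^2+\frac{T}{\nu\lambda_1}\sup_{\tau\ge t_0}|f(\tau)|^2$, and dividing by $\nu$ yields the second inequality in (\ref{avgv}).

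\emph{Enstrophy estimate (periodic case).} Here I would pair (\ref{nseqfunc}) with $Au$ and use the additional orthogonality (\ref{balgp}) to discard the nonlinear term, getting $\frac12\frac{d}{dt}\|u\|^2+\nu|Au|^2=(f,Au)\le\frac\nu2|Au|^2+\frac1{2\nu}|f|^2$. The same Gronwall argument --- now invoking (\ref{poincA}) --- produces a transient time after which $\|u(t)\|^2\le2\nu^2\lambda_1G^2$, and integrating $\frac{d}{dt}\|u\|^2+\nu|Au|^2\le\frac1\nu|f|^2$ over $[t,t+T]$ gives the time-averaged bound on $|Au|^2$ in (\ref{avgda}). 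Finally, (\ref{auball}) is exactly the uniform-in-time bound on $|Au(t)|$ for time-independent forcing proved by Dascaliuc, Foias and Jolly, so I would simply cite \cite{dascaliuc2010}.

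The one point requiring care in the elementary part is the bookkeeping forced by the $\limsup$ in (\ref{grash}): since $\|f(t)\|_{L^2}$ need only \emph{eventually} be close to $\nu^2\lambda_1G$ and the Gronwall estimate retains a nonzero, exponentially small transient, $t_0$ cannot be taken to be $0$ but must be chosen in terms of $u_0$ and of the rate at which $\|f(t)\|_{L^2}$ approaches its limit superior; the factor $2$ in (\ref{avgv})--(\ref{avgda}) is precisely the slack absorbing these two effects. The substantive difficulty --- upgrading time-averaged control of $|Au|$ to a pointwise-in-time bound --- is not touched by these computations and is exactly what (\ref{auball}), imported from \cite{dascaliuc2010}, provides.
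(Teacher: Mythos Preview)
Your proposal is correct. The paper itself does not prove Theorem~\ref{globalb}; it simply states the bounds and remarks that, with the exception of (\ref{auball}) due to Dascaliuc--Foias--Jolly \cite{dascaliuc2010}, they ``appear in any [of] the references listed above'' (\cite{constantin1988}, \cite{foias2001}, \cite{robinson2001}, \cite{temam1983}). Your sketch --- energy balance with (\ref{borth}) for (\ref{avgv}), enstrophy balance with (\ref{balgp}) for (\ref{avgda}), Gronwall to handle the transient and the $\limsup$ in (\ref{grash}), and a citation of \cite{dascaliuc2010} for (\ref{auball}) --- is exactly the standard argument one finds in those references, so your approach is not merely consistent with the paper's but is in fact more explicit than what the paper provides.
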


\noindent
%Note in the limit as $t\to \infty$ the bounds
%appearing in Theorem \ref{globalb} may each be reduced by half.
%In particular, $t_0$ can be chosen large enough such that the
%factor 2 on the right side of each of the above inequalities
%becomes a term of the form $1+\epsilon$ where $\epsilon>0$.
%We shall not pursue such improvements in the estimates that
%follow.

We now write the continuous data assimilation equations
(\ref{approxeqn}) for the incompressible two-dimensional
Navier--Stokes
equations.
%(\ref{nseq})--(\ref{incomp}).
Let $u$ be a strong solution of (\ref{nseq})--(\ref{incomp}),
or equivalently (\ref{nseqfunc}), as given
by Theorem \ref{exuniq}, and let $I_h$ be an interpolation
operator satisfying~(\ref{interpa}) or (\ref{interpah2}).
Suppose that $u$ is to be recovered from the observational
measurements $I_h(u(t))$, that have been
continuously recorded for times $t$ in $[0,T]$.
Then, the approximating solution $v$ with initial condition
$v_0\in V$, chosen arbitrarily, shall be given by

\[ {\partial v\over \partial t} -\nu \Delta v
	+(v\cdot \nabla)v+\nabla q = f+\mu (I_h(u) -I_h(v)),
\]
\[
	\nabla\cdot v = 0,
\]
on the interval $[0,T]$.
Using the above functional setting the above system is equivalent to
\begin{equation}
	{dv\over dt}+\nu Av+B(v,v)=f+\mu P_\sigma(I_h(u) -I_h(v)),
	\label{vapprox}
\end{equation}
on the interval $[0,T]$.

If we knew $u_0$ exactly, then we could
take $v_0=u_0$ and the resulting solution $v$ would be
identical to $u$ for all time; this is due to the uniqueness of the solutions of (\ref{vapprox}) (see Theorem \ref{vwposed}, below).
However, if we knew $u_0$ exactly, there would be no
need for continuous data assimilation in the first place
and one could integrate (\ref{nseqfunc}) directly
with the initial value $u_0$.
Intuitively speaking it makes sense to take
$v_0=P_\sigma I_h(u(0))$, which is the initial
observation of the solution $u$.
However, $v_0$ chosen in this way may not be an element of $V$.
The main point of the data assimilation method given
in (\ref{vapprox}) is to avoid the difficulties which come from
the direct insertion of observational measurements into the
approximate solution.
A choice for $v_0$ in agreement with this philosophy is $v_0=0$.
In fact, our results to hold equally well
when $v_0$ is chosen to be any element of $V$.
In either case we obtain an approximating solution $v$
constructed using only the observations of
the solution $I_h(u)$ and the known values of $\nu$ and $f$.

We now show the data assimilation equations (\ref{vapprox}) are well-posed.
When $I_h$ satisfies (\ref{interpa}) we show well-posedness for both
no-slip Dirichlet and periodic boundary conditions.
When $I_h$ satisfies (\ref{interpah2})
we will deal here, for simplicity, with only the case of periodic
boundary conditions.

\begin{theorem}
\label{vwposed}
Suppose $I_h$ satisfies (\ref{interpa}) and
$\mu c_0h^2\le\nu$, where $c_0$ is the constant appearing
in (\ref{interpa}).
Then the continuous data assimilation equations (\ref{vapprox})
possess unique strong solutions that satisfy
\begin{equation}
	v\in C\big([0,T];V\big)\cap L^2\big((0,T);D(A)\big)
\quad\hbox{and}\quad
	{dv\over dt}\in L^2\big((0,T);H\big),
	\label{apreg}
\end{equation}
for any $T>0$.
Furthermore, this solution
depends continuously on the initial data $v_0$ in the $V$ norm.
\end{theorem}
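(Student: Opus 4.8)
My plan is a Galerkin construction of the same kind used to prove Theorem~\ref{exuniq}, the only feature not already present in the classical theory for (\ref{nseqfunc}) being the feedback term $\mu P_\sigma\big(I_h(u)-I_h(v)\big)$, which is a bounded linear perturbation; the hypothesis $\mu c_0h^2\le\nu$ is exactly what makes its troublesome part absorbable by the viscous dissipation. Note first that whenever an inner product is taken against an element of $H$ one has $(P_\sigma g,\varphi)=(g,\varphi)$, so the Leray projector in front of the feedback term is harmless. I would fix $m\in\N$, let $P_m$ be the orthogonal projection of $H$ onto $\mathrm{span}\{w_1,\dots,w_m\}$, and look for $v_m(t)\in P_mH$ solving $dv_m/dt+\nu Av_m+P_mB(v_m,v_m)=P_mf+\mu P_mP_\sigma\big(I_h(u)-I_h(v_m)\big)$ with $v_m(0)=P_mv_0$; since $B$ is quadratic and $I_h$ is linear and bounded from $H^1$ to $L^2$ (as (\ref{interpa}) implies), this is an ODE with locally Lipschitz right-hand side, so it has a unique solution on a maximal subinterval of $[0,T]$, which the estimates below show to be all of $[0,T]$.

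For the $H$-bound, take the inner product with $v_m$: the term $\langle B(v_m,v_m),v_m\rangle$ vanishes by (\ref{borth}), and $(f,v_m)$ is handled by (\ref{poinc}) and Young. For the feedback term I would write $I_h(v_m)=v_m-\big(v_m-I_h(v_m)\big)$, so that $\mu\big(I_h(v_m),v_m\big)=\mu|v_m|^2-\mu\big(v_m-I_h(v_m),v_m\big)$, and then estimate, using (\ref{interpa}) and Young, $\mu\big|(v_m-I_h(v_m),v_m)\big|\le\mu c_0^{1/2}h\,\|v_m\|_{H^1}\,|v_m|\le\frac{\nu}{2}\|v_m\|^2+\big(\mu^2c_0h^2/\nu+C\mu\big)|v_m|^2$, where under $\mu c_0h^2\le\nu$ the coefficient of $|v_m|^2$ is a fixed multiple of $\mu$. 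Since $|I_h(u)|\in L^\infty(0,T)$ (because $u\in C([0,T];V)$ and $I_h$ is bounded), $\mu(I_h(u),v_m)$ is also controlled, and Gr\"onwall's lemma gives a bound for $v_m$ in $L^\infty(0,T;H)\cap L^2(0,T;V)$ uniform in $m$. For the $V$-bound, take the inner product with $Av_m$: in the periodic case $(B(v_m,v_m),Av_m)=0$ by (\ref{balgp}), while in general (\ref{beqmf}) gives $|(B(v_m,v_m),Av_m)|\le c|v_m|^{1/2}\|v_m\|\,|Av_m|^{3/2}\le\frac{\nu}{4}|Av_m|^2+C\nu^{-3}\big(|v_m|^2\|v_m\|^2\big)\|v_m\|^2$, whose coefficient is already in $L^1(0,T)$. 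Treating $(f,Av_m)$ and the feedback term as before, now paired with $Av_m$ and again invoking $\mu c_0h^2\le\nu$, a Gr\"onwall argument --- in its uniform form in the no-slip case, since $\int_0^T\|v_m\|^2$ is already bounded --- yields a bound for $v_m$ in $L^\infty(0,T;V)\cap L^2(0,T;D(A))$ uniform in $m$; this is the step where $v_0\in V$ enters. Reading $dv_m/dt$ off the equation then bounds it in $L^2(0,T;H)$.

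With these bounds the Aubin--Lions lemma, using the compactness of the embeddings of $D(A)$ in $V$ and of $V$ in $H$, gives a subsequence with $v_m\to v$ weakly-$*$ in $L^\infty(0,T;V)$, weakly in $L^2(0,T;D(A))$, $dv_m/dt\to dv/dt$ weakly in $L^2(0,T;H)$, and $v_m\to v$ strongly in $L^2(0,T;V)$ and in $C([0,T];H)$. Passing to the limit term by term is routine: the linear terms by weak convergence, $B(v_m,v_m)\to B(v,v)$ by the strong $L^2(0,T;V)$ convergence, and $\mu P_\sigma I_h(v_m)\to\mu P_\sigma I_h(v)$ in $L^2(0,T;H)$ because $I_h$ is bounded and linear. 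Hence $v$ is a strong solution with the regularity (\ref{apreg}), the continuity $v\in C([0,T];V)$ following by the Lions--Magenes lemma from $v\in L^2(0,T;D(A))$ and $dv/dt\in L^2(0,T;H)$.

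For uniqueness and continuous dependence let $v^{(1)},v^{(2)}$ be two such solutions and $w=v^{(1)}-v^{(2)}$, which satisfies $dw/dt+\nu Aw+B(w,v^{(1)})+B(v^{(2)},w)=-\mu P_\sigma I_h(w)$, $w(0)=v_0^{(1)}-v_0^{(2)}$. Pairing with $w$, using (\ref{borth}) on $B(v^{(2)},w)$, (\ref{lady}) for $|\langle B(w,v^{(1)}),w\rangle|\le c|w|\,\|w\|\,\|v^{(1)}\|$, and the same decomposition of $I_h(w)$ with $\mu c_0h^2\le\nu$, one gets $\frac{d}{dt}|w|^2\le\phi(t)|w|^2$ with $\phi\in L^1(0,T)$, giving uniqueness. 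Pairing instead with $Aw$ and estimating $B(w,v^{(1)})$ and $B(v^{(2)},w)$ by (\ref{beqmf}) --- their coefficients lying in $L^1(0,T)$ by the a priori bounds on $v^{(1)},v^{(2)}$ in $C([0,T];V)\cap L^2(0,T;D(A))$ --- and treating the feedback term as above, one gets $\frac{d}{dt}\|w\|^2\le\psi(t)\|w\|^2$ with $\psi\in L^1(0,T)$, so $\|w(t)\|^2\le\|w(0)\|^2\exp\big(\int_0^T\psi\big)$, with the exponent depending only on $\|v_0^{(1)}\|$, $\|v_0^{(2)}\|$, $\nu$, $f$ and $T$; thus the solution depends continuously --- indeed Lipschitz on bounded sets --- on $v_0$ in the $V$ norm. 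I expect the only genuinely delicate point, as opposed to bookkeeping, to be the feedback term: the contribution $\mu(v-I_h(v),\cdot)$ must be controlled by (\ref{interpa}) and absorbed into $\nu\|v\|^2$ (respectively $\nu|Av|^2$), which works precisely because $\mu c_0h^2\le\nu$, while the complementary piece $-\mu|v|^2$ (respectively $-\mu\|v\|^2$) is only a helpful damping; everything else is the classical two-dimensional Navier--Stokes theory.
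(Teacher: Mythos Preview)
Your proposal is correct and follows essentially the same route as the paper: Galerkin truncation, $H$- and $V$-level a~priori estimates obtained by splitting $I_h(v)=v-(v-I_h(v))$ and absorbing the $(v-I_h(v))$ piece into the viscous term via $\mu c_0h^2\le\nu$, Aubin--Lions compactness, and an energy estimate on the difference for uniqueness and $V$-continuous dependence. The paper's only cosmetic differences are that it groups $g:=f+\mu P_\sigma I_h(u)$ into a single known forcing and balances the Young inequalities so that the $\mu|v|^2$ contributions cancel exactly (yielding the cleaner inequality $\tfrac{d}{dt}|v|^2+\nu\|v\|^2\le \mu^{-1}|g|^2$), and it goes straight to the $A\delta$ pairing for uniqueness rather than first doing an $H$-level argument.
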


\begin{proof}
Define $g=f+\mu P_\sigma I_h(u)$.  Theorem \ref{exuniq}
implies $u\in C\big([0,T];V\big)$.  Consequently
$$|P_\sigma I_h(u)|\le |u-I_h(u)|+|u|
	\le \big(c_0^{1/2}h +\lambda_1^{-1/2}\big)\|u\|
$$
implies that $P_\sigma I_h(u)\in C\big([0,T];H\big)$.
Hence $g\in C\big([0,T];H\big)$.
This means there is a constant $M$ such that
$|g|^2<M$ for every $t\in [0,T]$.

We now show the existence of solutions $v$ to
(\ref{vapprox}) using the Galerkin method.
The proof follows the same ideas as the proof of
Theorem \ref{exuniq}.
Let $P_n$ be the
$n$-th Galerkin projector and $v^n$ be the solution to
the finite-dimensional Galerkin truncation
\begin{plain}
\begin{equation}
	\cases{\displaystyle
	{dv^n\over dt}+\nu Av^n+P_nB(v^n,v^n)=P_n g- \mu P_n I_h(v^n)\cr\cr
	v^n(0)=P_n v_0.\cr
	}
	\label{galerkinv}
\end{equation}
\end{plain}%
First, we observe that (\ref{galerkinv}) is a finite system of
ODEs, which has short time existence and uniqueness.
We focus on the maximal interval of existence, $[0,T_n)$, and
show uniform bound for   $v_n$, which are independent of $n$. This in turn will imply the global existence for  (\ref{galerkinv}).
Thus, our aim is to find bounds on $v^n$ which are uniform in $n$.
This will
then show global existence of solutions to (\ref{vapprox}).
In the estimates that follow, we denote the Galerkin solution $v^n$
by $v$ for notational simplicity.

Begin by taking inner products of (\ref{galerkinv}) with $v$ to
obtain
\begin{plain}
$$\eqalign{
	{1\over 2} {d\over dt} |v|^2+\nu\|v\|^2
		&=(g,v)-\mu\big(I_h(v),v\big)\cr
		&=(g,v)+\mu\big(v-I_h(v),v\big)-\mu|v|^2\cr
		&\le{1\over 2\mu}|g|^2+{\mu\over 2}|v|^2
			+{\mu\over 2} \big|P_\sigma (v-I_h(v))\big|^2
			+{\mu\over 2}|v|^2-\mu|v|^2\cr
		&\le{1\over 2\mu}|g|^2
			+{\mu c_0h^2\over 2} \|v\|^2.\cr
}$$
\end{plain}%
By hypothesis $h$ is so small that $\mu c_0h^2\le \nu.$
Therefore,
\begin{equation}
	{d\over dt} |v|^2+\nu\|v\|^2\le {1\over \mu}|g|^2,
	\label{svineq}
\end{equation}
and consequently
\begin{equation}
	{d\over dt} |v|^2+\nu\lambda_1 |v|^2\le {1\over \mu}M,
\quad\hbox{for every}\quad
	t\in [0,T_n).
	\label{rhineq}
\end{equation}
Multiplying (\ref{rhineq}) by $e^{\nu\lambda_1 t}$ and integrating
yields
$$
	|v(t)|^2\le |v_0|^2 e^{-\nu\lambda_1 t}
		+ {M\over\mu\nu\lambda_1}\Big(1-e^{-\nu\lambda_1 t}\Big)
		\le \rho_H^2,
\quad\hbox{for every}\quad
	t\in [0,T_n),
$$
where $$\rho_H^2=|v_0|^2+ {M\over\mu\nu\lambda_1}.$$
As this bound holds uniformly in $n$ for $T_n$ arbitrarily large,
we have global existence on the interval $[0,T]$, for all $T\ge 0$.
Now, integrating (\ref{svineq}) yields
$$
	|v(t)|^2-|v_0|^2+\nu\int_0^t \|v\|^2\le {t\over \mu}M.
$$
It follows that
$$
	\int_0^t\|v(\tau)\| d\tau\le \sigma_V^2,
\quad\hbox{for every}\quad t\in[0,T],
$$
where
$$\sigma_V^2={1\over\nu}|v_0|^2+{T\over\mu\nu} M.$$

Now, take inner products of (\ref{galerkinv}) with $Av$ to obtain
$$
{1\over 2}{d\over dt} \|v\|^2+\nu|Av|^2+\big(B(v,v),Av\big)
	=(g,Av)-\mu(I_h(v),Av).$$
Inequality (\ref{beqmf}) implies
\begin{plain}
$$\eqalign{
	\big|\big(B(v,v),Av\big)\big|&\le
	c|v|^{1/2}\|v\||Av|^{3/2}\cr
	&\le {1\over 4}
		\left({6^{3/4}\over\nu^{3/4}}c |v|^{1/2}\|v\|\right)^{4}
	+{3\over 4}
		\left({\nu^{3/4}\over 6^{3/4}}|Av|^{3/2}\right)^{4/3}\cr
	&\le {54c^4\over \nu^3} |v|^2\|v\|^4 + {\nu\over 8}|Av|^2.
}$$
\end{plain}%
Furthermore,
$$
	\big|(g,Av)\big|\le |g||Av|
		\le {2\over\nu}|g|^2 +{\nu\over 8}|Av|^2
$$
and by (\ref{interpa}) along with the assumption that $\mu c_0h^2\le\nu$
we obtain
\begin{plain}
$$\eqalign{
	\mu\big|(I_h(v),Av)\big|
		&\le {\mu^2\over\nu}\big|v-I_h(v)\big|^2
			+{\nu\over 4}|Av|^2-\mu\|v\|^2\cr
		&\le {\mu^2c_0h^2\over\nu}\|v\|^2
			+{\nu\over 4}|Av|^2-\mu\|v\|^2
			\le{\nu\over 4}|Av|^2.
}$$
\end{plain}%
Therefore,
\begin{equation}
	{d\over dt}\|v\|^2+\nu|Av|^2\le
		{54c^4\over \nu^3}|v|^2\|v\|^4+{4\over \nu}|g|^2,
	\label{sdineq}
\end{equation}
and consequently
\begin{equation}
	{d\over dt}\|v\|^2-{54c^4\over \nu^3}|v|^2\|v\|^4
	\le {4\over \nu}|g|^2
	\le {4\over \nu}M,
	\label{rvineq}
\end{equation}
for every $t\in[0,T]$.  Define
\begin{equation}
	\psi(t)=\exp\bigg\{-{54c^4\over\nu^3}\int_0^t |v|^2\|v\|^2\bigg\}.
	\label{intfact}
\end{equation}
Since
$$
	\int_0^t |v|^2\|v\|^2
		\le \rho_H^2\int_0^t\|v\|^2\le \rho_H^2\sigma_V^2<\infty,
\quad\hbox{for every}\quad t\in[0,T],
$$
we have that $\psi(t)>0$ for every $t\in[0,T]$.
Multiplying (\ref{rhineq}) by $\psi(t)$ and integrating yields
$$
	\|v(t)\|^2\le {1\over \psi(t)}
		\bigg\{\|v_0\|^2+ {4\over\nu}M\int_0^t\psi(s)ds\bigg\}
	\le \rho_V^2,
\quad\hbox{for all}\quad
	t\in[0,T],
$$
where
$$
	\rho_V^2={1\over\psi(T)}
		\bigg\{\|v_0\|^2+ {4T\over\nu}M\bigg\}.
$$
Now, integrating (\ref{sdineq}) yields
$$
	\|v(t)\|^2-\|v_0\|^2+\nu\int_0^t|Av|^2
		\le {54c^4\over\nu^3}\int_0^t
			\Big(|v|^2\|v\|^4+ {4\over\nu} |g|^2\Big)
		\le\sigma_{{\cal D}(A)}^2,
$$
for every $t\in[0,T]$, where
$$
\sigma_{{\cal D}(A)}^2=
 {54c^4T\over\nu^3}\
			\bigg\{\rho_H^2\rho_V^4+ {4\over\nu} M\bigg\}.
$$

The bounds $\rho_V$ and $\sigma_{{\cal D}(A)}$ are uniform in $n$.
Uniform bounds on $|dv/dt|$ then proceed in exactly the same
way as for the two-dimensional Navier--Stokes equations.
Since the estimates on the Galerkin solutions are uniform in $n$,
Aubin's compactness theorem \cite{aubin1963} allows one to extract
subsequences in such a way that the limit $v$ satisfies
(\ref{vapprox}) and (\ref{apreg}).

Next, we show that such solutions are unique and depend
continuously on the initial data.
Let $v_1$ and $v_2$ be two solutions for (\ref{vapprox})
both satisfying the conditions in (\ref{apreg}).
Choose $K$ large enough such that
$\|v_1\|^2\le K$ and $\|v_2\|^2\le K$ for almost every
$t\in[0,T]$.  Let $\delta=v_1-v_2$.  Then $\delta$ satisfies
$$
	{d\delta\over dt}+\nu A \delta
		+B(v_1,\delta)+B(\delta,v_2)
	=-\mu P_\sigma I_h(\delta).
$$
Taking inner product with $A\delta$ yields
$$
	{1\over 2}{d\over dt}\|\delta\|^2+\nu|A\delta|^2
		+\big(B(v_1,\delta),A\delta\big)
		+\big(B(\delta,v_2),A\delta\big)
	=-\mu(I_h(\delta),A\delta).
$$
Here we used the fact that
$$
{1\over 2}{d\over dt} \|\delta\|^2=\Big({d\delta\over dt}, A \delta\Big),
$$
which can be justified by Lemma 1.2 in Chapter 3 of Temam \cite{temam2001}
or Theorem 7.2 in Robinson \cite{robinson2001}
which is due to Lions--Magenes \cite{lions1972}.
Estimate the right-hand side of this equation as
\begin{plain}
$$\eqalign{
	-\mu(I_h(\delta),A\delta)
	&=\mu(\delta-I_h(\delta),A\delta)-\mu\|\delta\|^2\cr
	&\le {\mu^2\over 2\nu}\big|P_\sigma(\delta-I_h(\delta))
		\big|^2+{\nu\over 2}|A\delta|^2-\mu\|\delta\|^2\cr
	&\le {\mu^2c_0h^2\over 2\nu}\|\delta\|^2
		+{\nu\over 2}|A\delta|^2-\mu\|\delta\|^2
	\le {\nu\over 2}|A\delta|^2,
}$$
\end{plain}%
where we have again used the hypothesis that $\mu c_0h^2\le\nu$.
It follows that
\begin{equation}
	{1\over 2}{d\over dt}\|\delta\|^2
		+{\nu\over 2}|A\delta|^2
	\le
		\big|(B(v_1,\delta),A\delta)\big|
		+\big|(B(\delta,v_2),A\delta)\big|.
	\label{deltaeq}
\end{equation}
\par
The proof of uniqueness and continuity now proceeds as
for the two-dimensional Navier--Stokes equations.
In particular, estimate the non-linear terms on the right-hand
side of (\ref{deltaeq}) using
(\ref{beqmf}) and (\ref{beqlf}) as
\begin{plain}
$$\eqalign{
	\big|(B(v_1,\delta),A\delta)\big|
	&\le
		c|v_1|^{1/2}\|v_1\|^{1/2}\|\delta\|^{1/2}|A\delta|^{3/2}
	\le
		{27c^4K^2\over 4\nu^3\lambda_1}\|\delta\|^2+ {\nu\over 4}|A\delta|^2,
}$$
\end{plain}%
and
\begin{plain}
$$\eqalign{
	\big|(B(\delta,v_2),A\delta)\big|
	&\le
		c |\delta|^{1/2} \|v_2\| |A\delta|^{3/2}
	\le
		{27c^4K^2\over 4\nu^3\lambda_1}\|\delta\|^2
		+ {\nu\over 4}|A\delta|^2.
}$$
\end{plain}%
Therefore,
$$
	{d\over dt}\|\delta\|^2
	\le {27c^4K^2\over 2\nu^3\lambda_1} \|\delta\|^2,
\quad\hbox{for all}\quad t\in[0,T].
$$
Integrating yields
$$
	\|\delta(t)\|^2\le \|\delta_0\|^2
		\exp\bigg\{{27c^4K^2\over 2\nu^3\lambda_1} t\bigg\}.
$$
\par
Thus, the solutions $v$ to (\ref{vapprox}), which satisfy (\ref{apreg}),
also satisfy $v\in{\cal C}([0,T],V)$, and depend continuously on
the initial data in the $V$ norm.
\end{proof}

\begin{theorem}
\label{vwposedh2}
In the case of periodic boundary conditions
suppose that $I_h$ satisfies (\ref{interpah2}), and
$\mu c_0h^2\le\nu$, where $c_0$ is the constant appearing
in (\ref{interpah2}).
Then the continuous data assimilation equations (\ref{vapprox})
possess unique strong solutions that satisfy (\ref{apreg}),
%\begin{equation}
%	v\in C\big([0,T];V\big)\cap L^2\big((0,T);D(A)\big)
%\quad\hbox{and}\quad
%	{dv\over dt}\in L^2\big((0,T);H\big)
%	\label{apregh2}
%\end{equation}
for any $T>0$.
Furthermore, this solution is in $C\big([0,T],V\big)$ and
depends continuously on the initial data $v_0$ in the $V$ norm.
\end{theorem}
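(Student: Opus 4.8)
The plan is to reprise the proof of Theorem~\ref{vwposed} almost verbatim, changing only what the $H^2$ approximation property (\ref{interpah2}) and the periodic setting force. As before, put $g=f+\mu P_\sigma I_h(u)$; since the strong solution $u$ of Theorem~\ref{exuniq} lies in $L^2((0,T);{\cal D}(A))$, the value $I_h(u(t))$ is defined for almost every $t$, and (\ref{interpah2}) with the Poincar\'e inequalities gives $|P_\sigma I_h(u)|\le|u|+|u-I_h(u)|\le\lambda_1^{-1/2}\|u\|+{1\over 2}c_0h^2\|u\|_{H^2}$, so that $g\in L^2((0,T);H)$ with $\int_0^T|g|^2\le M$ for some $M=M(T)$. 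This is the only real change at the level of the data: in contrast with Theorem~\ref{vwposed}, $g$ need not be continuous in time, so everywhere one uses time-integrated bounds in place of the pointwise bound $|g|^2<M$.

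For the Galerkin approximations (denoted $v$, as in the proof of Theorem~\ref{vwposed}) I would skip the separate energy estimate and go straight to the inner product with $Av$. In the periodic case the orthogonality (\ref{balgp}) kills the term $(B(v,v),Av)$, leaving
$$ {1\over 2}{d\over dt}\|v\|^2+\nu|Av|^2=(g,Av)-\mu(I_h(v),Av). $$
Bound $(g,Av)\le{1\over\nu}|g|^2+{\nu\over 4}|Av|^2$; and for the feedback term write $-\mu(I_h(v),Av)=-\mu\|v\|^2+\mu(v-I_h(v),Av)$, discard the dissipative term $-\mu\|v\|^2\le 0$, and use (\ref{interpah2}) in the form $|v-I_h(v)|\le{1\over 2}c_0h^2|Av|$ (with $\|\cdot\|_{H^2}$ controlled by $|A\cdot|$ in the periodic zero-average setting) together with $\mu c_0h^2\le\nu$ to obtain $\mu(v-I_h(v),Av)\le{\mu c_0h^2\over 2}|Av|^2\le{\nu\over 2}|Av|^2$. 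This leaves ${d\over dt}\|v\|^2+{\nu\over 2}|Av|^2\le{2\over\nu}|g|^2$; since $|Av|^2\ge\lambda_1\|v\|^2$, Gronwall gives $\|v(t)\|^2\le\|v_0\|^2+{2\over\nu}\int_0^T|g|^2=:\rho_V^2$ uniformly in $n$, and integrating in time bounds $\int_0^T|Av|^2$ uniformly in $n$. Uniform bounds on $dv/dt$ in $L^2((0,T);H)$ then follow as in Theorem~\ref{vwposed} (using $|B(v,v)|\le c|v|^{1/2}\|v\||Av|^{1/2}$, which is square-integrable in time), and Aubin's compactness theorem \cite{aubin1963} extracts a subsequence converging to a solution $v$ of (\ref{vapprox}) with the regularity (\ref{apreg}); the feedback term passes to the limit by linearity of $I_h$. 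The step needing the most care is the feedback estimate itself: it is precisely the constant ${1\over 4}c_0^2h^4$ in (\ref{interpah2}), i.e. $|v-I_h(v)|\le{1\over 2}c_0h^2|Av|$, together with $\mu c_0h^2\le\nu$, that makes $\mu(v-I_h(v),Av)$ absorbable into $\nu|Av|^2$ with room to spare, and one must keep straight whether $\|\cdot\|_{H^2}$ in (\ref{interpah2}) is the full Sobolev norm or the seminorm $|A\cdot|$ and carry any equivalence constant through.

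Uniqueness and continuous dependence in the $V$ norm then go exactly as in Theorem~\ref{vwposed}: for two solutions with $\|v_i\|^2\le K$, set $\delta=v_1-v_2$, take the inner product of the difference equation with $A\delta$, bound $-\mu(I_h(\delta),A\delta)\le{\nu\over 2}|A\delta|^2$ by the computation above, estimate $(B(v_1,\delta),A\delta)$ and $(B(\delta,v_2),A\delta)$ by (\ref{beqmf}) and (\ref{beqlf}), and conclude ${d\over dt}\|\delta\|^2\le C\|\delta\|^2$ with $C=C(K,\nu,\lambda_1)$, whence $\|\delta(t)\|^2\le\|\delta_0\|^2\exp\{Ct\}$; in particular $v\in C([0,T];V)$ and depends continuously on $v_0$ in the $V$ norm. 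Everything besides the constant bookkeeping above is a transcription of the proof of Theorem~\ref{vwposed} and of the standard two-dimensional Navier--Stokes existence theory.
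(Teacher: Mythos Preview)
Your proposal is correct and follows essentially the same approach as the paper: the paper's proof is a one-sentence remark that the argument of Theorem~\ref{vwposed} goes through once one uses the periodic orthogonality $(B(v,v),Av)=0$ to obtain the $\|v\|$ and $\int_0^t|Av|^2$ estimates directly, which is exactly what you do. Your observation that $g$ is now only in $L^2((0,T);H)$ rather than $C([0,T];H)$ (because $I_h$ requires $H^2$ regularity and $u$ is only $L^2$ in time into ${\cal D}(A)$) is a genuine point that the paper's terse proof glosses over, and your handling of the feedback term $\mu(v-I_h(v),Av)\le{\mu c_0h^2\over 2}|Av|^2\le{\nu\over 2}|Av|^2$ by a direct bound is equivalent to (indeed slightly cleaner than) the Young-inequality route the paper uses later in Proposition~\ref{mainlemph2}.
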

\begin{proof}
The proof
is similar to the proof of Theorem \ref{vwposed} but
makes use of the identity (\ref{balgp}) to obtain
estimates on $\|v\|$ and $\int_0^t |Av|^2$ directly.
\end{proof}

The algorithm
given by equation (\ref{vapprox})
for constructing
the approximate solution $v$
contains two parameters $h$ and $\mu$.
The first parameter $h$ has dimensions of length and
corresponds to the resolution of the observational
measurements represented by $I_h(u)$.
Smaller values of $h$ correspond to spatially more accurate
resolved measurements.
The relaxation parameter $\mu$ controls the rate at which the
approximating solution~$v$ is forced toward the
observable part of the reference solution $u$.
Larger values of $\mu$ cause $I_h(v)$ to faster track $I_h(u)$.
It is the parameter $\mu$ which distinguishes (\ref{vapprox}) from
the previous methods of continuous data assimilation studied
in \cite{browning1998}, \cite{henshaw2003}, \cite{olson2003}
and \cite{olson2008}.

The condition that $\mu c_0h^2\le \nu$, given in Theorem \ref{vwposed},
places a restriction on the size of $\mu h^2$ compared
to the viscosity $\nu$, sufficient to ensure the data assimilation
equations are well-posed.
This restriction is due to the fact that the
%term $\mu I_h(v)$, i.e.
the interpolant operator $\mu I_h$ might generate large gradients and
spatial oscillations (``spill over" to the fine scales) that need
to be controlled (suppressed) by the viscosity term.
Notice that in the case
when $I_h=P_{m_h}$, where $P_{m_h}$ is the
orthogonal projection onto the linear sub-space spanned by the Fourier
modes with wave numbers $|k|\le m_h = \frac{1}{h}$, such oscillations
are not generated, since $-(\mu I_h(v), v) =-\mu|P_{m_h} v|^2$ and
$-(\mu I_h(v), Av) =-\mu\|P_{m_h} v\|^2$.
Consequently, there is
no restriction on $\mu h^2$ and
$\mu$ can be taken arbitrary large.
In the limit when $\mu \to \infty$ one obtains
exactly the same algorithm introduced in \cite{olson2003} (see also
\cite{olson2011}).
In particular, one has $P_{m_h} v=P_{m_h} u$, and all that one needs to
do is to solve for $q=(I-P_{m_h}) v$, for which an explicit evolution
equation is presented in \cite{olson2003}.

Next, our aim is to give further conditions on $h$ and $\mu$ which
guarantee that the difference between the approximating solution $v$
and the reference solution $u$ converges to zero as $t\to\infty$.
To do this we consider the time evolution of $w=u-v$.  Since $$
	B(u,u)-B(v,v)=B(u,w)+B(w,v)=B(u,w)+B(w,u)-B(w,w)$$
and $$
	I_h(u)-I_h(v)=I_h(w)
$$ then subtracting equation (\ref{vapprox}) from equation
(\ref{nseqfunc}) yields \begin{equation}
	{dw\over dt}+\nu A w + B(u,w)+B(w,u)-B(w,w)
		=-\mu P_\sigma I_h(w).
	\label{weq}
\end{equation} This equation serves as the starting point for the proofs
of Theorem \ref{mainres} and Theorem \ref{mainresp} in the proceeding
two sections.

\section{No-slip Dirichlet Boundary Conditions Case}

In this section we prove Theorem \ref{mainres}.
We first recall the following generalized Gronwall inequality
proved in Jones and Titi \cite{jones1992}, see also \cite{FTi}.

\begin{lemma}[Uniform Gronwall Inequality]
Let $T>0$ be fixed.  Suppose
$$
	{dY\over dt}+ \alpha(t) Y\le0,
\qquad\hbox{where}\qquad
	\limsup_{t\to\infty}\int_t^{t+T} \alpha(s)ds\ge \gamma>0.
$$
Then $Y(t)\to 0$ exponentially, as $t\to\infty$.
\label{unigron}
\end{lemma}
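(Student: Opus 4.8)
The plan is to combine the elementary scalar Gronwall argument with the hypothesis that the integral of $\alpha$ over windows of fixed length $T$ is eventually bounded below by a positive constant. First I would integrate the differential inequality directly: since $dY/dt + \alpha(t)Y \le 0$, multiplying by the integrating factor $\exp\{\int_{t_0}^t \alpha(s)\,ds\}$ and integrating from $t_0$ to $t$ gives the pointwise bound
\begin{equation}
	Y(t) \le Y(t_0)\exp\Big\{-\int_{t_0}^t \alpha(s)\,ds\Big\}
\end{equation}
for all $t\ge t_0$. (Implicitly one assumes $Y\ge 0$, or at least that we only care about controlling $Y$ from above; this is how the lemma is applied in the sequel, where $Y$ is a squared norm.) So the whole matter reduces to showing that $\int_{t_0}^t \alpha(s)\,ds \to \infty$ at a linear rate as $t\to\infty$.

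Next I would exploit the $\limsup$ hypothesis to fix a threshold time. By definition of $\limsup$, there is a time $t_*$ such that $\int_t^{t+T}\alpha(s)\,ds \ge \gamma/2 > 0$ for all $t\ge t_*$. Now for $t \ge t_*$ write $t = t_* + nT + r$ with $n = \lfloor (t-t_*)/T\rfloor \in \N$ and $r\in[0,T)$, and decompose
\begin{equation}
	\int_{t_*}^{t}\alpha(s)\,ds \;\ge\; \sum_{j=0}^{n-1}\int_{t_*+jT}^{t_*+(j+1)T}\alpha(s)\,ds \;\ge\; n\,\frac{\gamma}{2},
\end{equation}
where in the first inequality I have discarded the leftover integral over $[t_*+nT,t]$; this discarding step is legitimate provided $\alpha$ is locally integrable and, to keep the partial integrals nonnegative, one uses that in the application $\alpha\ge 0$ (or one simply absorbs any bounded deficit into the constant, since over a single window $[t_*+nT,t]$ of length less than $T$ the integral of $\alpha$ is bounded). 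Since $n \ge (t-t_*)/T - 1$, this yields $\int_{t_*}^t \alpha(s)\,ds \ge \frac{\gamma}{2T}(t-t_*) - \frac{\gamma}{2}$, a genuine linear lower bound.

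Finally I would assemble the pieces: for $t\ge t_*$,
\begin{equation}
	Y(t) \le Y(t_*)\exp\Big\{-\int_{t_*}^{t}\alpha(s)\,ds\Big\} \le Y(t_*)\,e^{\gamma/2}\exp\Big\{-\frac{\gamma}{2T}(t-t_*)\Big\},
\end{equation}
which tends to $0$ exponentially fast as $t\to\infty$, with explicit rate $\gamma/(2T)$. I do not anticipate a serious obstacle here; the only point requiring a little care is the bookkeeping when passing from the window estimate to the linear-in-$t$ estimate — specifically, making sure the leftover partial window and the sign of $\alpha$ are handled cleanly so that the discarded terms do not reverse the inequality. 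Everything else is the standard integrating-factor computation.
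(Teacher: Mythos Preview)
The paper does not actually prove this lemma; it merely recalls the statement and cites Jones--Titi \cite{jones1992} and Foias--Titi \cite{FTi}. So there is no in-paper argument to compare against, and the question becomes whether your proof is correct on its own.

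There is a genuine gap in your second step. You write: ``By definition of $\limsup$, there is a time $t_*$ such that $\int_t^{t+T}\alpha(s)\,ds \ge \gamma/2 > 0$ for all $t\ge t_*$.'' That is the characterisation of $\liminf_{t\to\infty}\int_t^{t+T}\alpha\ge\gamma$, not of the $\limsup$. A $\limsup\ge\gamma$ only guarantees that the window integral exceeds $\gamma-\epsilon$ along \emph{some} sequence of times tending to infinity, not for all large times. Under the $\limsup$ hypothesis alone the conclusion can fail: take $T=1$ and $\alpha$ periodic of period $2$ with $\alpha=3\gamma$ on $[0,1)$ and $\alpha=-3\gamma$ on $[1,2)$. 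Then $\int_t^{t+1}\alpha$ oscillates between $\pm 3\gamma$, so $\limsup\int_t^{t+1}\alpha=3\gamma\ge\gamma$, yet the solution of $dY/dt=-\alpha Y$ is periodic and does not decay.

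In other words, the statement as printed in the paper (with $\limsup$) is not correct without further hypotheses; the versions in the cited references use $\liminf$ (often together with a bound on $\int_t^{t+T}\alpha^-$ to control the leftover partial window). Your integrating-factor-plus-window-summation argument is exactly the right one for the $\liminf$ formulation, and that is precisely what the paper verifies in its applications: in Propositions~\ref{mainlem} and~\ref{mainlemp} the bound $\int_t^{t+T}\alpha\ge\gamma$ is established for \emph{all} $t\ge t_0$, i.e.\ a $\liminf$ bound. So the remedy is simply to replace $\limsup$ by $\liminf$ in both the hypothesis and your proof; once that is done, your argument is complete (the bookkeeping caveat you flag about the partial window is handled either by the auxiliary $\alpha^-$ hypothesis in the original references or, in the paper's concrete applications, by the fact that $\alpha$ is bounded below on bounded time intervals).
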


We now state and prove a lemma leading to our main result.

\begin{proposition}
Let $\Omega$ be an open, bounded and connected set in $\R^2$
with $C^2$ boundary, and
let $u$ be a solution of the incompressible two-dimensional
Navier--Stokes equations (\ref{nseqfunc})
on $\Omega$ with no-slip Dirichlet boundary conditions.
Let $v$ be the approximating solution given by equations (\ref{vapprox}).
Then $|u-v|\to 0$, as $t\to \infty$,
provided $\mu c_0h^2\le \nu$ and $\mu\ge 5c^2G^2\nu\lambda_1$.
\label{mainlem}
\end{proposition}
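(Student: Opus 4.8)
The plan is to obtain a differential inequality of the form $\frac{d}{dt}|w|^2+\alpha(t)|w|^2\le 0$ for $w=u-v$ and then invoke the Uniform Gronwall Lemma~\ref{unigron}. Since both $u$ and $v$ are strong solutions (Theorems~\ref{exuniq} and~\ref{vwposed}), their difference $w$ inherits the regularity $w\in C([0,T];V)\cap L^2((0,T);{\cal D}(A))$ with $dw/dt\in L^2((0,T);H)$, so the energy identity $\langle dw/dt,w\rangle=\tfrac12\frac{d}{dt}|w|^2$ is legitimate (by the Lions--Magenes lemma cited in the proof of Theorem~\ref{vwposed}). Pairing equation~(\ref{weq}) with $w$ in the $V^\ast$--$V$ duality and using the orthogonality~(\ref{borth}) to annihilate both $\langle B(u,w),w\rangle$ and $\langle B(w,w),w\rangle$, and noting $\langle \mu P_\sigma I_h(w),w\rangle=\mu(I_h(w),w)$, one is left with
\[
	\tfrac12\frac{d}{dt}|w|^2+\nu\|w\|^2
	=-\langle B(w,u),w\rangle-\mu\big(I_h(w),w\big).
\]

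Next I would estimate the two terms on the right. For the nonlinear term, apply~(\ref{lady}) in the form $|\langle B(w,u),w\rangle|\le c|w|\,\|w\|\,\|u\|$ and Young's inequality to get $|\langle B(w,u),w\rangle|\le\frac{\nu}{4}\|w\|^2+\frac{c^2}{\nu}\|u\|^2|w|^2$. For the feedback term, write $-\mu(I_h(w),w)=\mu(w-I_h(w),w)-\mu|w|^2$, bound $\mu(w-I_h(w),w)\le\frac{\mu}{2}|w-I_h(w)|^2+\frac{\mu}{2}|w|^2$, and use~(\ref{interpa}) together with the standing hypothesis $\mu c_0h^2\le\nu$ to conclude $\mu(w-I_h(w),w)\le\frac{\nu}{2}\|w\|^2+\frac{\mu}{2}|w|^2$; hence $-\mu(I_h(w),w)\le\frac{\nu}{2}\|w\|^2-\frac{\mu}{2}|w|^2$. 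Substituting, the dissipation $\nu\|w\|^2$ absorbs the $\frac{\nu}{4}\|w\|^2+\frac{\nu}{2}\|w\|^2$ produced on the right with a nonnegative remainder $\frac{\nu}{4}\|w\|^2$ to spare, which I discard. Multiplying by $2$ yields
\[
	\frac{d}{dt}|w|^2+\Big(\mu-\frac{2c^2}{\nu}\|u\|^2\Big)|w|^2\le 0 .
\]

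Finally I apply Lemma~\ref{unigron} with $Y=|w|^2$ and $\alpha(t)=\mu-\frac{2c^2}{\nu}\|u(t)\|^2$. By Theorem~\ref{globalb} there is $t_0$ (depending on $u_0$) such that $\int_t^{t+T}\|u(\tau)\|^2\,d\tau\le 2(1+T\nu\lambda_1)\nu G^2$ for all $t\ge t_0$, so $\int_t^{t+T}\alpha(s)\,ds\ge(\mu-4c^2G^2\nu\lambda_1)T-4c^2G^2$. Using $\mu\ge 5c^2G^2\nu\lambda_1$ the coefficient of $T$ is at least $c^2G^2\nu\lambda_1>0$, so taking, say, $T=8/(\nu\lambda_1)$ gives $\limsup_{t\to\infty}\int_t^{t+T}\alpha(s)\,ds\ge 4c^2G^2=:\gamma>0$, and Lemma~\ref{unigron} forces $|w(t)|^2\to 0$ exponentially, i.e. $|u-v|\to 0$ as $t\to\infty$. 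The delicate point — the main obstacle — is exactly the bookkeeping of the Young/interpolation splittings: the nonlinear term must be routed through the $\|u\|^2$ bound of Theorem~\ref{globalb} (whose running time average is $O(\nu^2\lambda_1 G^2)$) rather than through an $|u|^2\|u\|^2$ estimate, which would only yield $\mu\sim G^4$; and just enough of the viscous term $\nu\|w\|^2$ together with the feedback dissipation $-\mu|w|^2$ must be kept in reserve to swallow every $\|w\|^2$ contribution, which is precisely what the constraint $\mu c_0 h^2\le\nu$ guarantees. (As a side remark, choosing $\mu=5c^2G^2\nu\lambda_1$ and imposing $\mu c_0 h^2\le\nu$ recovers the resolution condition $1/h^2\ge c_1\lambda_1 G^2$ of Theorem~\ref{mainres} with $c_1=5c_0c^2$.)
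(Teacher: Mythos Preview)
Your proof is correct and follows essentially the same route as the paper's: pair~(\ref{weq}) with $w$, kill $\langle B(u,w),w\rangle$ and $\langle B(w,w),w\rangle$ via~(\ref{borth}), control $\langle B(w,u),w\rangle$ by~(\ref{lady}) plus Young, handle the feedback term via~(\ref{interpa}) and the hypothesis $\mu c_0h^2\le\nu$, and close with Lemma~\ref{unigron} using the time-averaged enstrophy bound~(\ref{avgv}). The only differences are cosmetic---the paper splits the nonlinear term as $\frac{\nu}{2}\|w\|^2+\frac{c^2}{2\nu}\|u\|^2|w|^2$ (so the viscous term is exactly consumed and the final inequality reads $\frac{d}{dt}|w|^2+(\mu-\frac{c^2}{\nu}\|u\|^2)|w|^2\le0$) and then takes $T=(\nu\lambda_1)^{-1}$, whereas you leave a spare $\frac{\nu}{4}\|w\|^2$, carry a harmless extra factor of~$2$, and compensate by choosing a larger~$T$; both versions land comfortably under the stated hypothesis $\mu\ge 5c^2G^2\nu\lambda_1$.
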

\begin{proof}
Let $w=u-v$.
Then $w$ satisfies equation (\ref{weq}) stated above.
Taking the inner product with $w$ we obtain
\begin{plain}
$$\eqalign{
{1\over 2}{d\over dt}|w|^2&
	+\nu\|w\|^2+\big(B(w,u),w\big)=-\mu(I_h(w),w)\cr
	&=\mu(w-I_h(w),w)-\mu|w|^2\cr
	&\le {\mu\over 2}\big|P_\sigma(w-I_h(w))\big|^2
		+{\mu\over 2} |w|^2-\mu|w|^2\cr
	&\le {\mu c_0h^2\over 2}\|w\|^2-{\mu\over 2} |w|^2
	\le {\nu\over 2}\|w\|^2-{\mu\over 2} |w|^2
	.\cr
}$$
\end{plain}%
Since (\ref{lady}) implies
$$
	\big|(B(w,u),w)\big|
		\le c \|u\| |w|\|w\|
		\le {c^2\over 2\nu}\|u\|^2|w|^2+{\nu\over 2}\|w\|^2,
$$
we obtain
$$
	{d\over dt} |w|^2
		+\Big(\mu-{c^2\over\nu}\|u\|^2\Big)|w|^2\le 0.
$$
Denote
$$\alpha(t)=\mu-{c^2\over \nu}\|u\|^2.$$
Taking $T=(\nu\lambda_1)^{-1}$ in Theorem \ref{globalb}
we have for $t\ge t_0$ that
$$
	\int_t^{t+T} \|v\|^2
		\le 2(1+T\nu\lambda_1) \nu G^2=4\nu G^2.
$$
Thus
$$
	\limsup_{t\to\infty} \int_t^{t+T}\alpha(s)ds
		\ge {\mu\over \nu\lambda_1}-4c^2G^2\ge c^2G^2>0,
$$
and by Lemma \ref{unigron} it follows
that $|w|\to 0$, exponentially, as $t\to \infty$.
\end{proof}

\begin{proof}[Proof of Theorem \ref{mainres}]
The hypothesis of Proposition \ref{mainlem} require that
$$
	\mu c_0h^2\le\nu
\qquad\hbox{and}\qquad
	\mu\ge 5c^2G^2\nu\lambda_1.
$$
Therefore
\begin{equation}
	{1\over h^2}\ge {\mu c_0\over\nu} \ge c_1 G^2\lambda_1
	\label{defc1}
\end{equation}
where $c_1=5c_0c^2$.
\end{proof}

\section{Periodic Boundary Conditions Case}

In this section we prove Theorem \ref{mainresp}.
We begin with an elementary inequality which will be
be referred to in the sequel.
\begin{lemma}
\label{minlog}
Let $\phi(r)=r-\beta(1+\log r)$ where $\beta>0$.  Then
$$
	\min\{\phi(r):r\ge 1\}\ge-\beta\log \beta.$$
\end{lemma}
\begin{proof}
Note first that
$$
	\phi(1)=1-\beta
\qquad\hbox{and}\qquad
	\lim_{r\to\infty} \phi(r)=\infty.
$$
The derivative
$\phi'(r)=1-{\beta/r}$ is zero
if and only if
$r=\beta$.  Therefore
\begin{plain}
$$
	\min\{\phi(r):r\ge 1\}=\cases{
		1-\beta & if $0<\beta\le 1$\cr
		-\beta\log\beta&if $1< \beta$.\cr
}$$
Observe that over the interval $0<\beta\le 1$ we have
$1-\beta\ge -\beta\log\beta$, which concludes our proof.
%To see this we consider $\psi(\beta)=1-\beta+\beta\log\beta$.
%Then $\psi(0^+)=1$ and $\psi(1)=0$.
%Since $\psi'(\beta)=\log\beta\le 0$ it follows that $\psi(\beta)\ge 0$
%for $0<\beta\le 1$.
\end{plain}%
\end{proof}

We now state and prove a lemma leading to the
proof of Theorem \ref{mainresp}.

\begin{proposition}
\label{mainlemp}
Let $\Omega=[0,L]^2$, for some fixed $L>0$.
Let $u$ be a solution of the incompressible two-dimensional
Navier--Stokes equations (\ref{nseqfunc})
on $\Omega$ equipped with periodic boundary conditions.
Let $v$ be the approximating solution given by equations (\ref{vapprox}),
where $I_h$ satisfies (\ref{interpa}).
Then $\|u-v\|\to 0$, as $t\to \infty$,
provided $\mu c_0h^2\le \nu$ and
$\mu\ge 3\nu\lambda_1 \big(2c\log 2c^{3/2}+8c\log(1+G)\big)G$.
\end{proposition}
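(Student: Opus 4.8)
The plan is to imitate the proof of Proposition~\ref{mainlem}, but now working at the level of the $V$ norm instead of the $H$ norm, since we seek convergence of $\|u-v\|$ for periodic boundary conditions. Let $w=u-v$, so that $w$ satisfies (\ref{weq}). First I would take the inner product of (\ref{weq}) with $Aw$. The linear dissipation gives $\nu|Aw|^2$; the feedback term $-\mu(I_h(w),Aw)$ is handled exactly as in the proof of Theorem~\ref{vwposed}, writing $-\mu(I_h(w),Aw)=\mu(w-I_h(w),Aw)-\mu\|w\|^2$, applying (\ref{interpa}) and the Cauchy--Schwarz/Young inequality, and using $\mu c_0 h^2\le\nu$ to absorb the resulting $\|w\|^2$ term and part of $|Aw|^2$ into the dissipation, leaving a term $-\mu\|w\|^2$ on the good side. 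The remaining work is to estimate the three nonlinear contributions $(B(u,w),Aw)$, $(B(w,u),Aw)$, and $(B(w,w),Aw)$; the last vanishes by the periodic orthogonality property (\ref{balgp}), and the first two combine via (\ref{borthp}) into $-(B(w,w),Au)$.

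Thus the crux is to bound $\big|(B(w,w),Au)\big|$ in a way that produces a factor that can be dominated by the relaxation term $\mu\|w\|^2$ modulo the dissipation $\tfrac{\nu}{2}|Aw|^2$. Here I would use a logarithmic Sobolev-type inequality for the bilinear term in 2D periodic domains — the estimate controlled by $\|Au\|$ together with a $(1+\log(\,|Au|/(\lambda_1^{1/2}\|w\|))\,)$ correction (this is the "Brezis--Gallouet" type bound alluded to by the reference to (\ref{brezis}) in the remarks after Theorem~\ref{mainresp}). Combined with the a priori bounds (\ref{auball}) on $|Au|^2\le c\nu^2\lambda_1^2(1+G)^4$ and (\ref{avgda}) controlling $\int |Au|^2$, this yields a differential inequality of the form
$$
\frac{d}{dt}\|w\|^2 + \nu|Aw|^2 + 2\mu\|w\|^2 \le \Phi(t)\|w\|^2,
$$
where $\Phi(t)$ involves $|Au(t)|^2$ times a logarithm. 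After throwing away the $\nu|Aw|^2$ term we set $\alpha(t)=2\mu-\Phi(t)$ and invoke the Uniform Gronwall Inequality, Lemma~\ref{unigron}: it suffices to show $\limsup_{t\to\infty}\int_t^{t+T}\alpha(s)\,ds\ge\gamma>0$ for $T=(\nu\lambda_1)^{-1}$. Using the time-average bound on $\int|Au|^2$ from (\ref{avgda}) together with the uniform bound (\ref{auball}) inside the logarithm (which is where the $\log(1+G)$ enters, via $\log$ applied to $c(1+G)^4$), one obtains $\limsup_{t\to\infty}\int_t^{t+T}\Phi(s)\,ds\le \mu$ precisely under the stated hypothesis $\mu\ge 3\nu\lambda_1\big(2c\log 2c^{3/2}+8c\log(1+G)\big)G$, so that $\alpha$ satisfies the Gronwall hypothesis with $\gamma$ a fixed positive multiple of $\mu$, and hence $\|w\|\to 0$ exponentially.

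The main obstacle is the nonlinear estimate: plain applications of (\ref{lady}) or (\ref{beqmf}) are not sharp enough to close the argument with $h$ constrained merely by $1/h^2\gtrsim\lambda_1 G(1+\log(1+G))$ rather than by $1/h^2\gtrsim\lambda_1 G^2$ as in the Dirichlet case. One must exploit the periodic identity (\ref{balgp})/(\ref{borthp}) to transfer the top-order derivative off $w$ and onto $u$, and then use the logarithmic inequality so that the effective coefficient multiplying $\|w\|^2$ grows only like $G\log(1+G)$ rather than $G^2$; keeping careful track of the constants through the Young inequalities is what pins down the explicit value of $\mu$ and the constant $c_2$ in (\ref{defc2}). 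A secondary technical point is justifying $\tfrac12\frac{d}{dt}\|w\|^2=(dw/dt,Aw)$, which follows from the regularity of strong solutions together with the Lions--Magenes lemma, exactly as invoked in the proof of Theorem~\ref{vwposed}.
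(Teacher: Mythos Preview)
Your overall architecture is right and matches the paper: take the inner product of (\ref{weq}) with $Aw$, use the periodic identities (\ref{balgp})--(\ref{borthp}) to reduce the nonlinear terms to $(B(w,w),Au)$, treat the feedback term exactly as in Theorem~\ref{vwposed}, apply the Br\'ezis--Gallouet inequality, and close with Lemma~\ref{unigron}. However, there is one genuine gap.

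The Br\'ezis--Gallouet inequality is applied to $w$, not to $u$: one gets
\[
\big|(B(w,w),Au)\big|\le c\,\|w\|^2\Big\{1+\log\frac{|Aw|^2}{\lambda_1\|w\|^2}\Big\}\,|Au|,
\]
so the logarithmic factor depends on $|Aw|$, i.e.\ on the unknown $w$, not merely on $|Au|$. Your proposal says ``after throwing away the $\nu|Aw|^2$ term we set $\alpha(t)=2\mu-\Phi(t)$'', but if you discard the dissipation you are left with a coefficient $\Phi$ that still contains $\log\big(|Aw|^2/(\lambda_1\|w\|^2)\big)$, and the uniform Gronwall Lemma~\ref{unigron} does not apply because $\alpha$ is not a function of $t$ alone. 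The paper's key step, which you are missing, is \emph{not} to throw away $\nu|Aw|^2$: writing $r=|Aw|^2/(\lambda_1\|w\|^2)\ge 1$ and $\beta=2c|Au|/(\nu\lambda_1)$, the dissipation and the logarithmic term combine as $\nu\lambda_1\big(r-\beta(1+\log r)\big)\|w\|^2$, and the elementary optimization Lemma~\ref{minlog} gives $r-\beta(1+\log r)\ge -\beta\log\beta$. This is precisely what converts the $w$-dependent logarithm into $2c|Au|\log\big(2c|Au|/(\nu\lambda_1)\big)$, after which (\ref{auball}) bounds the logarithm by the constant $J$ of (\ref{jdef}), Young's inequality turns $J|Au|$ into a multiple of $|Au|^2$, and then your Gronwall argument with (\ref{avgda}) goes through. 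In short: the dissipative term is not discarded but is essential, via Lemma~\ref{minlog}, to eliminate the dependence on $|Aw|$.
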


\begin{proof}
The proof makes use of the orthogonality properties
(\ref{balgp}) and (\ref{borthp}) along with the
Br\'ezis--Gallouet inequality \cite{brezis1980}
which may be written as
\begin{equation}
	\|u\|_{L^{\infty}(\Omega)}
		\le c\|u\|\bigg\{
		1+\log {|Au|^2\over \lambda_1 \|u\|^2}\bigg\},
	\label{brezis}	
\end{equation}
which will allow us to obtain sharper estimates than for the case of
no-slip boundary conditions.

Take the inner product of equation (\ref{weq}) with $Aw$ and
and use the orthogonality relations (\ref{balgp}) and
(\ref{borthp}) to obtain
$$
	{1\over 2} {d\|w\|^2\over dt}
	+\nu |Aw|^2 = \big(B(w,w),Au\big)-\mu (I_h(w),Aw).
$$
Using (\ref{brezis}) and the hypothesis $\mu c_0
h^2\le\nu$ we have
$$
	\big|\big(B(w,w),Au\big)\big|
		\le c\|w\|^2
		\bigg\{1+\log {|Aw|^2\over \lambda_1 \|w\|^2}\bigg\}|Au|,
$$
and
\begin{plain}
$$
	\eqalign{
	-\mu (I_h(w), Aw)&=\mu(w-I_h(w),Aw)-\mu\|w\|^2\cr
	&\le \mu|P_\sigma(w-I_h(w))||Aw| -\mu \|w\|^2\cr
	&\le {\mu^2c_0h^2\over 2\nu}\|w\|^2
		+{\nu\over 2}|Aw|^2 -\mu \|w\|^2
	\le {\nu\over 2}|Aw|^2 -{\mu\over 2}\|w\|^2.\cr
	}
$$
\end{plain}%
Therefore,
$$
	{d\|w\|^2\over dt} + \nu|Aw|^2
	\le\bigg( 2c|Au|\bigg\{1+\log{|Aw|^2\over \lambda_1\|w\|^2}\bigg\}
			- \mu\bigg)\|w\|^2,
$$
or
$$
	{d\|w\|^2\over dt} + \bigg(
		\nu\lambda_1 {|Aw|^2\over \lambda_1 \|w\|^2}
	-2c|Au|\bigg\{1+\log{|Aw|^2\over \lambda_1\|w\|^2}\bigg\}
			+ \mu\bigg)\|w\|^2\le 0.
$$

Now setting
$$
	\beta={2c|Au|\over\nu\lambda_1}
\qquad\hbox{and}\qquad
	r={|Aw|^2\over \lambda_1\|w\|^2}
$$
in Lemma \ref{minlog},
and noting that $r\ge 1$, by Poincar\'e's inequality
(\ref{poincA}), we obtain
$$
	{d\|w\|^2\over dt}
	+\bigg\{\mu-2c|Au|\log {2c|Au|\over \nu\lambda_1}
	\bigg\}\|w\|^2\le 0.
$$
By (\ref{auball}) we estimate
$$
	2c\log {2c|Au|\over\nu\lambda_1}
		\le J,
$$
where
\begin{equation}
		J=c_3+c_4\log(1+G),
	\label{jdef}
\end{equation}
$c_3=2c\log 2c^{3/2}$ and $c_4=8c$.
It follows that
$$
	{d\|w\|^2\over dt}
		+\Big\{\mu-J|Au|\Big\}\|w\|^2\le 0.
$$
Furthermore, Young's inequality
$$
	J|Au|\le {J^2\over 2\mu}|Au|^2 + {\mu\over 2}$$
implies
$$
	{d\|w\|^2\over dt}
		+{1\over 2}\bigg\{\mu-
		{J^2\over\mu}|Au|^2\bigg\}\|w\|^2\le 0.
$$
Denote
$$
	\alpha(t)={1\over 2}\bigg\{\mu-{J^2 \over \mu}|Au(t)|^2\bigg\}.
$$
Taking $T=(\nu\lambda_1)^{-1}$ in Theorem \ref{globalb}
we have for $t\ge t_0$ that
$$
	\int_t^{t+T}|Au|^2 \le
		2(1+T\nu\lambda_1)\nu\lambda_1 G^2
		=4\nu\lambda_1 G^2.
$$
Thus,
$$
	\limsup_{t\to\infty}
		\int_t^{t+T}\alpha(s)ds\ge
		{\mu\over2\nu\lambda_1}-{2\nu\lambda_1\over\mu}J^2G^2
	={5\over 6} JG>0,
$$
and consequently  $\|w\|\to 0$ exponentially, as $t\to\infty$.
\end{proof}

\begin{proposition}
\label{mainlemph2}
Let $\Omega=[0,L]^2$, for some fixed $L>0$.
Let $u$ be a solution of the incompressible two-dimensional
Navier--Stokes equations (\ref{nseqfunc})
on $\Omega$, equipped with periodic boundary conditions.
Let $v$ be the approximating solution given by equations (\ref{vapprox})
where $I_h$ satisfies (\ref{interpah2}).
Then $\|u-v\|\to 0$, as $t\to \infty$,
provided $\mu c_0h^2\le \nu$ and
$\mu\ge 3\nu\lambda_1 \big(2c\log 2c^{3/2}+8c\log(1+G)\big)G$.
\end{proposition}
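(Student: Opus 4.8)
The plan is to follow the proof of Proposition~\ref{mainlemp} almost verbatim, the sole modification being the treatment of the feedback term $-\mu(I_h(w),Aw)$, for which the hypothesis (\ref{interpah2}) now replaces (\ref{interpa}). As there, set $w=u-v$, so that $w$ satisfies (\ref{weq}); take the inner product of (\ref{weq}) with $Aw$ and use the periodic orthogonality identities (\ref{balgp}) and (\ref{borthp}) to obtain
\[
	{1\over 2}{d\|w\|^2\over dt}+\nu|Aw|^2
		=\big(B(w,w),Au\big)-\mu\big(I_h(w),Aw\big).
\]

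Next I would estimate the feedback term using (\ref{interpah2}). Writing $-\mu(I_h(w),Aw)=\mu(w-I_h(w),Aw)-\mu\|w\|^2$ and noting that for periodic, divergence-free fields with zero average $\|w\|_{H^2(\Omega)}$ is controlled by $|Aw|$ (the equivalence constant being absorbed into the constants, as elsewhere in the paper), (\ref{interpah2}) gives $|P_\sigma(w-I_h(w))|\le{1\over 2}c_0h^2|Aw|$, hence
\[
	\mu\big|\big(w-I_h(w),Aw\big)\big|
		\le{\mu c_0h^2\over 2}|Aw|^2\le{\nu\over 2}|Aw|^2
\]
by the hypothesis $\mu c_0h^2\le\nu$. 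Therefore $-\mu(I_h(w),Aw)\le{\nu\over 2}|Aw|^2-\mu\|w\|^2\le{\nu\over 2}|Aw|^2-{\mu\over 2}\|w\|^2$, which is precisely the bound used in the proof of Proposition~\ref{mainlemp}. In particular the same smallness requirement $\mu c_0h^2\le\nu$ suffices, because the extra power $h^2$ in (\ref{interpah2}) relative to (\ref{interpa}) is exactly compensated by the fact that, after testing with $Aw$, the interpolation error is paired against a quantity one derivative higher.

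From here the argument is identical to that of Proposition~\ref{mainlemp}: bound $(B(w,w),Au)$ by the Br\'ezis--Gallouet inequality (\ref{brezis}); apply Lemma~\ref{minlog} with $\beta=2c|Au|/(\nu\lambda_1)$ and $r=|Aw|^2/(\lambda_1\|w\|^2)\ge1$; use (\ref{auball}) to reduce the logarithmic coefficient to $J=c_3+c_4\log(1+G)$ as in (\ref{jdef}), arriving at ${d\|w\|^2\over dt}+\{\mu-J|Au|\}\|w\|^2\le0$; apply Young's inequality; and finish with the uniform Gronwall inequality, Lemma~\ref{unigron}, using $\int_t^{t+T}|Au|^2\le4\nu\lambda_1G^2$ from (\ref{avgda}) with $T=(\nu\lambda_1)^{-1}$, to conclude that $\|w\|\to0$ exponentially under $\mu\ge3\nu\lambda_1\big(2c\log 2c^{3/2}+8c\log(1+G)\big)G$. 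Since this proposition is essentially a transcription of Proposition~\ref{mainlemp}, there is no substantial obstacle; the only point demanding care is the passage from the $H^2$-norm appearing in (\ref{interpah2}) to the quantity $|Aw|$ and the verification that the unchanged condition $\mu c_0h^2\le\nu$ still does the job.
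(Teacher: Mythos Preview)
Your proposal is correct and follows essentially the same approach as the paper: both modify only the estimate of $-\mu(I_h(w),Aw)$, arriving at the bound ${\nu\over 2}|Aw|^2-\mu\|w\|^2$ (then weakened to $-{\mu\over2}\|w\|^2$) and quoting the remainder of Proposition~\ref{mainlemp} unchanged. The only cosmetic difference is that the paper inserts a Young's inequality step, bounding $\mu|w-I_h(w)||Aw|\le{\mu^2c_0^2h^4\over4\nu}|Aw|^2+{\nu\over4}|Aw|^2$ before using $\mu c_0h^2\le\nu$, whereas you apply (\ref{interpah2}) directly to get $\mu|w-I_h(w)||Aw|\le{\mu c_0h^2\over2}|Aw|^2\le{\nu\over2}|Aw|^2$; the end result and the hypotheses invoked are identical.
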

\begin{proof}
The proof is the same as the proof of Proposition \ref{mainlemp} except
that the estimate for $-\mu\big(I_h(w),Aw\big)$ has to be modified as
\begin{plain}
$$
	\eqalign{
	-\mu (I_h(w), Aw)&=\mu(w-I_h(w),Aw)-\mu\|w\|^2\cr
	&\le \mu|w-I_h(w)||Aw| -\mu \|w\|^2\cr
	&\le {\mu^2c_0^2h^4\over 4\nu}|Aw|^2
		+{\nu\over 4}|Aw|^2 -\mu \|w\|^2
	\le {\nu\over 2}|Aw|^2 -\mu\|w\|^2.\cr
	}
$$
\end{plain}%
Then, since $-\mu<-\mu/2$ the rest of the proof follows without change.
\end{proof}

\begin{proof}[Proof of Theorem \ref{mainresp}]
The hypothesis of Proposition \ref{mainlemp} or Proposition \ref{mainlemph2}
require that
$$
	\mu c_0h^2\le \nu
\qquad\hbox{and}\qquad
	\mu\ge 3\nu\lambda_1 JG.
$$
Therefore,
\begin{equation}
	{1\over h^2}\ge {\mu c_0\over \nu}\ge c_2\lambda_1G\big(1+\log(1+G)\big),
	\label{defc2}
\end{equation}
where $c_2=3\max\{c_3,c_4\}$.
\end{proof}

\section{Conclusions}

As shown in this paper, the algorithm given by (\ref{approxeqn}),
for constructing $v(t)$ from the observations
$I_h(u(t))$, yields an approximation for $u(t)$ such that
\begin{equation}
\|u(t)-v(t)\|_{L^2(\Omega)}\to 0
\qquad\hbox{exponentially, as}  t\to\infty,
	\label{c1x}
\end{equation}
provided the observations have fine enough spatial resolution.
This result has the following consequence.
To accurately predict $u(t)$ for time $T$
into the future it is sufficient to have observational
data $I_h(u(t))$ accumulated over an interval of time
linearly proportional to $T$ in the immediate past.

In particular, suppose it is desired to predict $u(t)$ with
accuracy $\epsilon>0$ on the interval $[t_1,t_1+T^*]$,
where $t_1$ is the present time and $T^*>0$
determines how far into the future to
make the prediction.
Let $h$ be small enough and $\mu$ large
enough so that Theorem \ref{mainres} implies
(\ref{c1x}).  Thus, there is $\alpha>0$ and a
constant $C>0$ such
that
$$\|u(t)-v(t)\|_{L^2(\Omega)}\le
	Ce^{-\alpha t}
\qquad\hbox{for all}\qquad t\ge 0.$$
Now use $v(t_1)$ as the initial condition
from which to make a future prediction.

Let $w$ be a solution to (\ref{nseqfunc})
with initial condition $w(t_1)=v(t_1)$.
Known results on continuous dependence
on initial conditions,
see, for example, \cite{constantin1988}, \cite{olson2011},
\cite{robinson2001} or \cite{temam1983}, imply
there is $\beta>0$ such that
$$
	\|w(t)-u(t)\|_{L^2(\Omega)}
	\le \|w(t_1)-u(t_1)\|_{L^2(\Omega)}
		e^{\beta (t-t_1)}
\qquad\hbox{for}\qquad t\ge t_1.$$
Therefore
$$
	\|w(t)-u(t)\|\le Ce^{-\alpha t_1+\beta T}<\epsilon
\qquad\hbox{for}\qquad t\in [t_1,t_1+T]$$
provided
	$\alpha t_1\ge \beta T +\ln(C/\epsilon)$.
Thus $w(t)$ predicts $u(t)$
with accuracy $\epsilon$ on the interval
$[t_1,t_1+T]$.

Work is currently
underway to numerically test
Theorem \ref{mainresp} in the
case of determining finite volume elements and nodes.
Of particular focus is how to tune the
parameter $\mu$.
If $\mu$ is very large the effects of ``spill over" into
the fine scales may become significant,
whereas if $\mu$ is small convergence
of the approximate solution may be slow or not happen at all.
Numerical simulations performed by
Gesho \cite{masa2013} confirm that
the   continuous data assimilation algorithm given by
equation (\ref{approxeqn})  directly works, without additional filtering,
for observational measurements at a discrete
set of nodal points, where $I_h$
is given by (\ref{interpnode}).
As with previous computational work (cf. \cite{olson2011},\cite{olson2003} and \cite{olson2008}) 
the approximating solution $v(t)$
converges to the reference solution $u(t)$
under much less stringent conditions than
required by our theory.

The main advantage of introducing a control term that
forces the approximate solution toward the reference
solution is that we can rely on the viscous dissipation, already
present in the dynamics, to filter the observational data (that is, to suppress the spatial oscillations, i.e. the ``spill over" into the fine scales, that are generated by the coarse-mesh stabilizing  term $\mu I_h(v)$).
In addition to working for a general class of interpolant observables
this technique also allows processing of observational data
which contains stochastic noise.
In particular, the same algorithm can be used to
obtain an approximation $v(t)$  that converges (in some sense) to the reference solution
$u(t)$,  to within an error of the order of    $\mu$ times the variance of the noise in the measurements.
This work \cite{bessaih2014} is in progress.

\section{Acknowledgements}

The work of A.A.~is supported in part by the DFG grants SFB-910 and SFB-947.
E.S.T.~is thankful to the kind hospitality of the Freie
Universit\"at Berlin, where this work was initiated. E.S.T.~also acknowledges the partial support of the
Alexander von Humboldt Stiftung/Foundation, the
Minerva Stiftung/Foundation, and the National Science Foundation grants DMS--1009950,
DMS--1109640 and DMS--1109645.

\vfill\eject


\begin{thebibliography}{99}

\bibitem{aubin1963} J.P. Aubin, Un th\'eor\`eme de compacit\'e,
{\it C.R. Acad. Sci. Paris S\'er. I Math.},
Vol. 256, 1963, pp. 5042--5044.

\bibitem{azouani2013} A. Azouani, E.S. Titi,
Feedback control of nonlinear dissipative systems by finite
determining parameters---a reaction-diffusion paradigm,
arXiv:1301.6992.

\bibitem{bessaih2014} H. Bessaih, E. Olson, E.S. Titi,
Continuous assimilation of data with stochastic noise, in preparation.

\bibitem{stuart2012}
D. Bl\"omker, K.J.H. Law, A.M.
Stuart, K. C. Zygalakis,
Accuracy and stability of the
continuous-times 3DVAR filter
for the Navier--Stokes equations,
{\it preprint},{\tt\ arXiv:1210.1594v2}.

\bibitem{brennerscott2007} S.C. Brenner, R. Scott,
{\it The Mathematical Theorey of Finite Element Methods},
Springer, 2007.

\bibitem{brezis1980} H. Br\'ezis, T. Gallouet,
Nonlinear Schr\"odinger evolution equations,
{\it Nonlinear Anal.}, Vol. 4, No. 4, 1980, pp. 677--681.

\bibitem{browning1998} G.L. Browning, W.D. Henshaw, H.O. Kreiss,
A numerical investigation of the interaction between the large
and small scales of the two-dimensional incompressible
Navier--Stokes equations, Research report LA-UR-98-1712,
Los Alamos National Laboratory, 1998.

\bibitem{cockburn1996} B. Cockburn, D.A. Jones, E.S. Titi,
Estimating the number of asymptotic degrees of freedom for
nonlinear dissipative systems, {\it Mathematics of Computation},
Vol. 66, No. 219, July 1997, pp. 1073--1087.

\bibitem{constantin1988} P. Constantin, C. Foias,
{\it Navier--Stokes Equations}, University of Chicago Press,
1988.

\bibitem{daley1991} R. Daley,
{\it Atmospheric Data Analysis}, Cambridge Atmospheric and
Space Science Series, Cambridge University Press, 1991.

\bibitem{dascaliuc2010} R. Dascaliuc, C. Foias,
M.S. Jolly, Estimates on enstrophy, palinstrophy, and
invariant measures for 2-D turbulence, {\it Journal of
Differential Equations}, Vol. 248, 2010, pp. 792--819.

\bibitem{FoJoKrTi} C. Foias, M. Jolly, R. Kravchenko and E.S. Titi, \textit {A determining form for the 2D Navier-Stokes equations - the Fourier modes case}, {\it Journal of Mathematical Physics}, Vol. 53, 2012), 115623.

\bibitem{foias2001} C. Foias, O. Manley, R. Rosa, R. Temam,
{\it Navier--Stokes Equations and Turbulence},
Encyclopedia of Mathematics and Its Applications 83,
Cambridge University Press, 2001.

\bibitem{FP} C. Foias, G. Prodi,
Sur le comportement global des solutions non-stationnaires des
\'equations de Navier-Stokes en dimension 2,
{\it Rend. Sem. Mat. Univ. Padova}, Vol. 39, 1967, pp. 1--34.

\bibitem{FT1}
C. Foias; R. Temam, Sur la d\'etermination d'un \'ecoulement fluide
par des observations discr\`etes [On the determination of fluid flows
by discrete observations, {\it C. R. Acad. Sci. Paris S\'er. I Math.}
Vol. 295, No. 3, 1982, pp. 239--241 (and the continuation
in Vol. 295, No 9, 1982, pp. 523--525).

\bibitem{FT2}
C. Foias, R. Temam,
Determination of the solutions of the Navier-Stokes equations by a
set of nodal values, {\it Math. Comp.} Vol. 43, No 167, 1984, pp. 117--133.

\bibitem{FTi}
C. Foias, E.S. Titi,
Determining nodes, finite difference schemes and inertial manifolds,
{\it Nonlinearity}, Vol. 4, No. 1, 1991, pp. 135--153.

\bibitem{masa2013} M. Gesho,
{\it A Numerical Study of Continuous Data Assimilation
Using Nodal Points in Space for the
Two-dimensional Navier--Stokes Equations},
Masters Thesis, University of Nevada, Department of Mathematics and Statistics, 2013.

\bibitem{olson2011}
K. Hayden, E. Olson, E.S. Titi, Discrete data assimilation in the Lorenz
and 2D Navier-Stokes equations, {\it Physica D: Nonlinear Phenomena},
Vol. 240, No. 18, 2011, pp. 1416--1425.

\bibitem{henshaw2003} W.D. Henshaw, H.O. Kreiss, J. Ystr\"om,
Numerical experiments on the interaction between the large
and small-scale motions of the Navier--Stokes equations,
{\it Multiscale Modeling and Simulation}, Vol. 1, No. 1, 2003,
pp. 119--149.

\bibitem{jones1992} D.A. Jones, E.S. Titi,
Determining finite volume elements for the 2D
Navier--Stokes equations, {\it Physica D}, Vol. 60, 1992, pp. 165--174.

\bibitem{jones1993} D.A. Jones, E.S. Titi,
Upper bounds on the number of determining modes, nodes
and volume elements for the Navier--Stokes equations,
{\it Indiana Univ. Math. J.}, Vol. 42, No. 3,
1993, pp. 875--887.

\bibitem{lions1972} J.L. Lions, E. Magenes, {\it Nonhomogeneous
Boundary Value Problems}, Springer, Berlin, 1972.

\bibitem{olson2003} E. Olson, E.S. Titi,
Determining modes for continuous data assimilation in 2D turbulence,
{\it Journal of Statistical Physics},
Vol. 113, No. 5--6, 2003, pp. 799--840.

\bibitem{olson2008} E. Olson, E.S. Titi,
Determining modes and Grashoff number in 2D turbulence,
{\it Theoretical and Computational Fluid Dynamics},
Vol. 22, No. 5, 2008, pp. 327--339.

\bibitem{robinson2001} J. Robinson, {\it Infinite-Dimensional
Dynamical Systems}, Cambridge Texts in Applied Mathematics, 2001.

\bibitem{temam1983} R. Temam, {\it Navier--Stokes Equations
and Nonlinear Functional Analysis}, CBMS Regional
Conference Series, No. 41, SIAM, Philadelphia, 1983.

\bibitem{temam2001} R. Temam, {\it Navier--Stokes Equations:
Theory and Numerical Analysis}, revised edition,
AMS Chelsea Publising, 2001.

\end{thebibliography}
\end{document}